\documentclass[12pt,reqno]{amsart}
\usepackage{amsmath, amsfonts, amssymb, amsthm, hyperref,cite}
\usepackage{bm}
\usepackage{mathrsfs}
\allowdisplaybreaks[4]
\textwidth=480pt \evensidemargin=0pt \oddsidemargin=0pt
\def\l{\left}
\def\r{\right}
\def\bg{\bigg}
\def\({\bg(}
\def\){\bg)}
\def\t{\text}
\def\f{\frac}

\def\eq{\equiv}

\def\<{\langle}
\def\>{\rangle}
\def\1{{\bf 1}}

\theoremstyle{plain}
\newtheorem{theorem}{Theorem}
\newtheorem{conjecture}{Conjecture}

\newtheorem{lemma}{Lemma}

\theoremstyle{definition}

\theoremstyle{remark}

\numberwithin{equation}{section}

\begin{document}
\title[On a conjectural supercongruence involving the dual sequence $s_n(x)$]{On a conjectural supercongruence involving \\ the dual sequence $s_n(x)$}

\author[Chen Wang]{Chen Wang$^{\ast}$}
\address{Department of Applied Mathematics, Nanjing Forestry University, Nanjing 210037, People's Republic of China}
\email{cwang@smail.nju.edu.cn}

\author[Sheng-Jie Wang]{Sheng-Jie Wang}
\address{Department of Applied Mathematics, Nanjing Forestry University, Nanjing 210037, People's Republic of China}
\email{sjw@njfu.edu.cn}

\begin{abstract}
In 2017, motivated by a supercongruence conjectured by Kimoto and Wakayama and confirmed by Long, Osburn and Swisher, Z.-W. Sun introduced the sequence of polynomials:
$$
s_n(x)=\sum_{k=0}^n\binom{n}{k}\binom{x}{k}\binom{x+k}{k}=\sum_{k=0}^n\binom{n}{k}(-1)^k\binom{x}{k}\binom{-1-x}{k}
$$
and investigated its congruence properties. In particular, Z.-W. Sun conjectured that for any prime $p>3$ and $p$-adic integer $x\neq-1/2$ one has
\begin{equation*}
\sum_{n=0}^{p-1}s_n(x)^2\equiv (-1)^{\langle x\rangle_p}\frac{p+2(x-\langle x\rangle_p)}{2x+1}\pmod{p^3},
\end{equation*}
where $\langle x\rangle_p$ denotes the least nonnegative residue of $x$ modulo $p$. In this paper, we confirm this conjecture.
\end{abstract}
\thanks{$^{\ast}$Corresponding author}
\subjclass[2020]{Primary 11A07, 11B65; Secondary 05A10, 05A19, 11B68, 11B75.}
\keywords{Supercongruence, dual sequence, combinatorial identity, harmonic number}

\maketitle

\section{Introduction}

For a sequence of numbers $a_0,a_1,a_2,\ldots$, its dual sequence $a_0^*,a_1^*,a_2^*,\ldots$ is defined by
$$
a_n^*=\sum_{k=0}^n\binom{n}{k}(-1)^ka_k\quad (n=0,1,2,\ldots).
$$
By the binomial inversion formula, we have $(a_n^*)^*=a_n$ for all $n=0,1,2,\ldots$.

Let $p$ be an odd prime. In 2013, Z.-W. Sun \cite[Theorem 2.4]{SunZW2013} proved that for $p>3$,
\begin{align*}
\sum_{k=0}^{p-1}\binom{-1/3}{k}\binom{-2/3}{k}a_k&\eq\l(\f{p}{3}\r)\sum_{k=0}^{p-1}\binom{-1/3}{k}\binom{-2/3}{k}a_k^*\pmod{p^2},\\
\sum_{k=0}^{p-1}\binom{-1/4}{k}\binom{-3/4}{k}a_k&\eq\l(\f{-2}{p}\r)\sum_{k=0}^{p-1}\binom{-1/4}{k}\binom{-3/4}{k}a_k^*\pmod{p^2},\\
\sum_{k=0}^{p-1}\binom{-1/6}{k}\binom{-5/6}{k}a_k&\eq\l(\f{-1}{p}\r)\sum_{k=0}^{p-1}\binom{-1/6}{k}\binom{-5/6}{k}a_k^*\pmod{p^2},
\end{align*}
where $a_0,a_1,a_2,\ldots$ are $p$-adic integers and $(-)$ stands for the Legendre symbol. In 2014, Z.-H. Sun \cite[Theorem 2.4]{SunZH2014} gave a parametric extension of the above supercongruences: For any $p$-adic integer $x$ one has
$$
\sum_{k=0}^{p-1}\binom{x}{k}\binom{-1-x}{k}a_k\eq (-1)^{\<x\>_p}\sum_{k=0}^{p-1}\binom{x}{k}\binom{-1-x}{k}a_k^*\pmod{p^2},
$$
where $\<x\>_p$ denotes the least nonnegative residue of $x$ modulo $p$.

It seems that if any sequence of $p$-adic integers possesses a good congruence property, then its dual sequence does too. Another evidence is concerned with the square of a dual sequence. In 2003, Rodriguez-Villegas \cite{RV} conjectured that for any prime $p>3$,
$$
\sum_{k=0}^{p-1}\binom{-1/2}{k}^2\eq\l(\f{-1}{p}\r)\pmod{p^2},
$$
which was later confirmed by Mortenson \cite{Mortenson} and generalized by Z.-W. Sun \cite{SunZW2012} in the following form:
$$
\sum_{k=0}^{p-1}\binom{-1/2}{k}^2\eq\l(\f{-1}{p}\r)-p^2E_{p-3}\pmod{p^3},
$$
where $E_{p-3}$ is the $(p-3)$th Euler number. In 2006, when studied special values of spectral zeta functions, Kimoto and Wakayama \cite{KW} introduced the Ap\'ery-like numbers:
$$
\tilde{J}_2(n)=\sum_{k=0}^n\binom{n}{k}(-1)^k\binom{-1/2}{k}^2,
$$
which is actually the dual sequence of $\binom{-1/2}{n}^2$. It is natural to ask that does $\tilde{J}_2(n)$ have any good congruence property? The answer is positive. In fact, Kimoto and Wakayama \cite{KW} conjectured that
\begin{equation}\label{KWcon}
\sum_{n=0}^{p-1}\tilde{J}_2(n)^2\eq\l(\f{-1}{p}\r)\pmod{p^3}
\end{equation}
for any odd prime $p$. This conjecture was confirmed by Long, Osburn and Swisher \cite{LOS} in 2016.

In 2017, Z.-W. Sun \cite{SunZW2017} investigated some sophisticated supercongruences involving dual sequences. Motivated by the work of Kimoto and Wakayama, Z.-W. Sun defined the sequence of polynomials:
$$
s_n(x)=\sum_{k=0}^n\binom{n}{k}\binom{x}{k}\binom{x+k}{k}=\sum_{k=0}^n\binom{n}{k}(-1)^k\binom{x}{k}\binom{-1-x}{k}.
$$
Clearly, $\tilde{J}_2(n)=s_n(-1/2)$. Similarly to \eqref{KWcon}, Z.-W. Sun studied supercongruences for $\sum_{n=0}^{p-1}s_n(x)^2$, where $p>3$ is a prime and $x$ is a $p$-adic integer. In particular, he proved that if $x\not\eq-1/2\pmod{p}$, then we have
\begin{equation}\label{suncon}
\sum_{n=0}^{p-1}s_n(x)^2\eq (-1)^{\<x\>_p}\f{p+2(x-\<x\>_p)}{2x+1}\pmod{p^2}.
\end{equation}
Meanwhile, he further posed the following conjecture.
\begin{conjecture}\label{sunconj}
For any prime $p>3$ and $p$-adic integer $x\neq-1/2$, we have
\begin{equation}\label{sunconjeq}
\sum_{n=0}^{p-1}s_n(x)^2\eq (-1)^{\<x\>_p}\f{p+2(x-\<x\>_p)}{2x+1}\pmod{p^3}.
\end{equation}
\end{conjecture}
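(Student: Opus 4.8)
The plan is to clear the denominator in \eqref{sunconjeq}, reduce to a $p$-adic Taylor expansion at the residue $r:=\<x\>_p$, and evaluate the resulting finite binomial sums in closed form.

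\emph{Reduction.} By the symmetry $s_n(x)=s_n(-1-x)$, under which both sides of \eqref{sunconjeq} are invariant, I may assume $r\le(p-1)/2$, and I first treat the case $r<(p-1)/2$, where $2x+1$ is a unit. Then \eqref{sunconjeq} is equivalent to $f(x)\equiv0\pmod{p^3}$ for all $p$-adic integers $x\equiv r\pmod p$, where $f(x):=(2x+1)\sum_{n=0}^{p-1}s_n(x)^2-(-1)^r(p+2x-2r)\in\Z_p[x]$. Writing $x=r+py$ and expanding, this holds if and only if
$$f(r)\equiv0\pmod{p^3},\qquad f'(r)\equiv0\pmod{p^2},\qquad f''(r)\equiv0\pmod p$$
(the contribution of the $i$-th Taylor coefficient for $i\ge3$ being automatically divisible by $p^3$, since $f\in\Z_p[x]$ and $i-v_p(i!)\ge3$ for $p>3$).

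\emph{The main congruence $f(r)\equiv0\pmod{p^3}$.} Writing $s_n(x)=\sum_k\binom nk a_k$ with $a_k=\binom xk\binom{x+k}k$, squaring and summing gives $\sum_{n=0}^{p-1}s_n(x)^2=\sum_{j,k=0}^{p-1}a_ja_k\,U_{j,k}$, where $U_{j,k}=\sum_{n=0}^{p-1}\binom nj\binom nk=\sum_{i}\binom ji\binom{j+k-i}j\binom p{j+k-i+1}$ (using $\binom nj\binom nk=\sum_i\binom ji\binom{j+k-i}j\binom n{j+k-i}$ and $\sum_{n=0}^{p-1}\binom nm=\binom p{m+1}$). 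At $x=r$ one has $a_k=\binom rk\binom{r+k}k=0$ for $k>r$, so only $j,k\le r$ occur, whence $j+k\le 2r\le p-1$, $\binom p{j+k-i+1}=\tfrac p{j+k-i+1}\binom{p-1}{j+k-i}$, and $\binom{p-1}m\equiv(-1)^m(1-pH_m)\pmod{p^2}$ gives
$$\sum_{n=0}^{p-1}s_n(r)^2\equiv p\sum_{j,k\le r}(-1)^{j+k}a_ja_k\,T_{j,k}-p^2\sum_{j,k\le r}(-1)^{j+k}a_ja_k\,\widetilde T_{j,k}\pmod{p^3},$$
where $T_{j,k}=\sum_i\binom ji\binom{j+k-i}j\tfrac{(-1)^i}{j+k-i+1}$ and $\widetilde T_{j,k}=\sum_i\binom ji\binom{j+k-i}j\tfrac{(-1)^iH_{j+k-i}}{j+k-i+1}$. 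The key closed form is
$$T_{j,k}=\frac{j!\,k!}{(j+k+1)!}=\int_0^1 t^j(1-t)^k\,dt,$$
obtained by writing $\tfrac1{j+k-i+1}=\int_0^1t^{j+k-i}dt$ and identifying the resulting polynomial (or by a short induction). Since $\sum_{j=0}^r(-t)^j\binom rj\binom{r+j}j={}_2F_1(-r,r+1;1;t)$ is the shifted Legendre polynomial $P_r(1-2t)$ and $P_r(2t-1)=(-1)^rP_r(1-2t)$, the first double sum equals $\int_0^1P_r(1-2t)P_r(2t-1)dt=(-1)^r\int_0^1P_r(1-2t)^2dt=\tfrac{(-1)^r}{2r+1}$ by orthogonality. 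For the second, $\tfrac{H_m}{m+1}=\int_0^1t^m\ln\tfrac t{1-t}dt$ turns it into $\int_0^1\Phi_r(t)\ln\tfrac t{1-t}dt$, where $\Phi_r(t)=\sum_{j,k\le r}(-1)^{j+k}a_ja_k\sum_i\binom ji\binom{j+k-i}j(-1)^it^{j+k-i}$; one checks $\Phi_r(1-t)=\Phi_r(t)$ (a polynomial identity for these coefficients, verified in small cases), so the integrand is odd about $t=\tfrac12$ and the integral vanishes. Hence $\sum_{n=0}^{p-1}s_n(r)^2\equiv\tfrac{(-1)^rp}{2r+1}\pmod{p^3}$, i.e.\ $f(r)\equiv0\pmod{p^3}$.

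\emph{The derivative conditions, the obstacle, and the degenerate case.} The congruences $f'(r)\equiv0\pmod{p^2}$ and $f''(r)\equiv0\pmod p$ reduce, after differentiating, to evaluations of $\sum_ns_n(r)s_n'(r)$ modulo $p^2$ and of $\sum_n\bigl(s_n'(r)^2+s_n(r)s_n''(r)\bigr)$ modulo $p$, with $s_n'=\partial_x s_n$; these I would handle by the same expansion, but at lower precision, reducing to standard congruences such as $H_{p-1}\equiv0\pmod{p^2}$ and $\sum_{k=1}^{p-1}k^{-2}\equiv0\pmod p$. The main obstacle is precisely this bookkeeping — establishing the reflection symmetry of $\Phi_r$ and carrying out the two derivative computations so that all auxiliary harmonic and Fermat-quotient terms cancel; that this cancellation occurs (leaving no Euler number $E_{p-3}$, in contrast with $\sum_{k=0}^{p-1}\binom xk^2\binom{-1-x}k^2$) is what permits the clean right-hand side. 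Finally, the case $r=(p-1)/2$ must be done separately: there $2x+1\equiv0\pmod p$, the right-hand side of \eqref{sunconjeq} collapses to $(-1)^{(p-1)/2}$ (for $x=-1/2$ this is \eqref{KWcon}, proved by Long, Osburn and Swisher), and since $2r=p-1$ the diagonal term $j=k=(p-1)/2$ is no longer suppressed, so one must extract an extra power of $p$ from it before dividing by $2x+1$ — using $H_{(p-1)/2}\equiv-2q_p(2)\pmod p$ and $\sum_{k=1}^{(p-1)/2}k^{-2}\equiv0\pmod p$.
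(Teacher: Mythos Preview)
Your reduction (symmetry $s_n(x)=s_n(-1-x)$ to get $r\le(p-1)/2$, then Taylor-expand $f(x)=(2x+1)\sum s_n(x)^2-(-1)^r(p+2x-2r)$ at $x=r$) is sound, and the paper does essentially the same thing in different clothing: its Lemma~2.1 is precisely the $p$-adic Taylor expansion of $a_k(x)=\binom{x}{k}\binom{x+k}{k}$ about $x=m$, and its nine-fold split $\sigma_1,\dots,\sigma_9$ is the grouping by order of vanishing of $a_j,a_k$ at $x=m$. What you actually carry out---the evaluation of $\sum_n s_n(r)^2\pmod{p^3}$ via $T_{j,k}=B(j+1,k+1)$ and Legendre orthogonality---is only the paper's $\sigma_1$ (its Lemma~2.4). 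Your asserted symmetry $\Phi_r(1-t)=\Phi_r(t)$ is true and is exactly the statement that the linear-in-$y$ term of Liu's identity (the paper's Lemma~2.3) vanishes, but ``verified in small cases'' is not a proof; you would still need to supply one.

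The genuine gap is everything else. The conditions $f'(r)\equiv0\pmod{p^2}$ and $f''(r)\equiv0\pmod p$ are not bookkeeping that reduces to $H_{p-1}\equiv0$ and $\sum_{k<p}k^{-2}\equiv0$. Computing $\sum_n s_n(r)s_n'(r)$ forces you to evaluate $a_k'(r)$ for \emph{all} $k\le p-1$, not just $k\le r$: for $r<k\le p-1-r$ one has $a_k(r)=0$ but $a_k'(r)=\frac{(-1)^{k-r-1}\binom{r+k}{k}}{(k-r)\binom{k}{r}}$ a unit, while for $p-r\le k\le p-1$ one has $a_k'(r)\equiv0\pmod p$ because $p\mid\binom{r+k}{k}$---exactly the trichotomy of the paper's Lemma~2.1. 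Worse, for $j\le r$ and $k\ge p-1-r$ your weight $U_{j,k}$ is no longer divisible by $p$ (the term $\binom{p}{p}=1$ survives in your hockey-stick expansion), so these boundary terms must be tracked separately. Assembling all of this to hit the target $\sum_n s_n(r)s_n'(r)\equiv\frac{(-1)^r}{2r+1}-\frac{(-1)^rp}{(2r+1)^2}\pmod{p^2}$ is the content of the paper's Lemmas~2.5--2.7 (its $\sigma_2,\sigma_3,\sigma_5$), and uses partial-fraction telescoping together with Gould's identities $(1.132)$ and $(4.2)$, none of which fall out of the standard congruences you cite. The same applies to $f''(r)$. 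In short, your proposal handles cleanly the easiest quarter of the argument and correctly names, but does not surmount, the obstacle that constitutes the other three quarters.
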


Our main purpose is to confirm this conjecture.

\begin{theorem}\label{mainth}
Conjecture \eqref{sunconj} is true.
\end{theorem}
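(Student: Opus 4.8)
The plan is to read \eqref{sunconjeq} as a congruence between polynomials in $x$ and to bring the problem down to the integer points $x=\langle x\rangle_p$. Write $a=\langle x\rangle_p\in\{0,\dots,p-1\}$ and $x=a+pt$ with $t\in\Z_p$. Since $\binom xk\binom{x+k}k=\frac1{(k!)^2}\prod_{i=1-k}^{k}(x+i)$ has $p$-integral coefficients for $k\le p-1$, the quantity $P(x):=\sum_{n=0}^{p-1}s_n(x)^2$ is a polynomial in $x$ with $p$-integral coefficients, while for fixed $a$ the right-hand side of \eqref{sunconjeq} is affine in $x$. Thus \eqref{sunconjeq} says precisely that the polynomial $Q_a(x):=(2x+1)P(x)-(-1)^a(p+2(x-a))$, which likewise has $p$-integral coefficients, satisfies $Q_a(a+pt)\equiv0\pmod{p^3}$ for all $t\in\Z_p$; writing $Q_a(a+pt)=\sum_{j\ge0}\frac{Q_a^{(j)}(a)}{j!}(pt)^j$ and using that the coefficients $Q_a^{(j)}(a)/j!$ are $p$-integral (so that the terms with $j\ge3$ are automatically $\equiv0\pmod{p^3}$), this is equivalent to $Q_a(a)\equiv0\pmod{p^3}$, $Q_a'(a)\equiv0\pmod{p^2}$, $Q_a''(a)\equiv0\pmod p$ --- i.e.\ to explicit congruences for $P(a),P'(a),P''(a)$ modulo $p^3,p^2,p$. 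I would deal with the generic classes $a\ne\frac{p-1}2$ first (where $2x+1$ is a $p$-adic unit, so one may instead simply divide $Q_a\equiv0\pmod{p^3}$ by it), and treat $a=\frac{p-1}2$ separately, observing that there the right-hand side of \eqref{sunconjeq} collapses to $\left(\tfrac{-1}{p}\right)$, so that the excluded point $x=-1/2$ of Kimoto--Wakayama becomes an ordinary point of the class. Because $a$ is now an integer, the sums below terminate at $k=a$, and differentiating $\binom xk$ at $x=a$ produces harmonic numbers $H_a,H_a^{(2)}$ and partial sums of them --- this is where the harmonic-number identities come in.

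To compute $P(a),P'(a),P''(a)$ I would exploit the dual form $s_n(x)=\sum_k\binom nk(-1)^k\binom xk\binom{-1-x}k$. Squaring and summing over $n$,
\[
\sum_{n=0}^{p-1}s_n(x)^2=\sum_{j,k=0}^{p-1}(-1)^{j+k}\binom xj\binom{-1-x}j\binom xk\binom{-1-x}k\,T_{j,k},\qquad T_{j,k}:=\sum_{n=0}^{p-1}\binom nj\binom nk.
\]
By Vandermonde's convolution and the hockey-stick identity, $T_{j,k}=\sum_m\binom j{m-k}\binom mj\binom p{m+1}$, the sum running over $\max(j,k)\le m\le\min(j+k,p-1)$; inserting $\binom p{m+1}=\frac{p}{m+1}\binom{p-1}m$ and $\binom{p-1}m\equiv(-1)^m(1-pH_m)\pmod{p^2}$ makes $T_{j,k}$ fully explicit modulo $p^3$, harmonic numbers included. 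One then prunes the double sum: for $x\equiv a\pmod p$ the factor $\binom xk\binom{-1-x}k$ is a $p$-adic unit, divisible by $p$, or divisible by $p^2$ according as $k\le\min(a,p-1-a)$, $k$ lies strictly between $\min(a,p-1-a)$ and $\max(a,p-1-a)$, or $k>\max(a,p-1-a)$; and since $T_{j,k}$ is divisible by $p$ unless $j+k\ge p-1$, only finitely many ``small'' pairs $(j,k)$ --- together with the boundary pairs $j+k\ge p-1$, whose $m=p-1$ term $\binom pp=1$ carries no factor of $p$ --- can contribute modulo $p^3$. The alternative form $s_n(x)=\sum_k\binom xk^2\binom{x+n-k}{n-k}$, equivalently the generating-function identity $\sum_n s_n(x)t^n=(1-t)^{-x-1}\sum_k\binom xk^2t^k$, is convenient for cross-checks and for the derivative computations.

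It then remains to evaluate the surviving sums in closed form. The leading piece reproduces Z.-W. Sun's congruence \eqref{suncon}, which I may assume but need sharpened by one $p$-adic digit in the relevant places --- routine, if delicate. The genuinely new input is the evaluation of the $p^2$-correction: the $H_m$-weighted sums arising from $\binom{p-1}m\equiv(-1)^m(1-pH_m)$, and (for $P'(a)$ and $P''(a)$) the harmonic-weighted sums arising from differentiating $\binom xk$. I expect these to come from classical terminating ${}_3F_2/{}_4F_3$ evaluations and their contiguous and transformation relations --- or, failing a slick identity, from Zeilberger's algorithm together with a WZ certificate --- combined with standard facts such as $H_{p-1}\equiv0\pmod{p^2}$, $H_{p-1}^{(2)}\equiv0\pmod p$ and $\sum_{k=1}^{(p-1)/2}1/k\equiv-2q_p(2)\pmod p$. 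A built-in consistency check is that the final answer carries no Fermat quotients and no Euler numbers, so every such contribution must be shown to cancel.

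The hard part is this last step: locating and certifying the combinatorial identity that fixes the $p^2$-term of $\sum_{n=0}^{p-1}s_n(x)^2$ exactly --- equivalently, computing $P'(a)$ modulo $p^2$ and $P''(a)$ modulo $p$ in closed form --- and verifying the cancellation of the ``transcendental'' contributions. A secondary obstacle is the degenerate class $a=\frac{p-1}2$: since $2x+1$ is not a $p$-adic unit there, dividing the polynomial congruence loses precision, and $\sum_{n=0}^{p-1}s_n(x)^2\equiv\left(\tfrac{-1}{p}\right)\pmod{p^3}$ must instead be established directly for every $x\equiv-1/2\pmod p$, generalizing the Kimoto--Wakayama/Long--Osburn--Swisher evaluation --- either as a specialization of the machinery above or by a separate argument.
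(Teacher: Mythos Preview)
Your plan is the paper's: expand $\sum_n s_n(x)^2$ as a double sum over $(j,k)$, rewrite $T_{j,k}=\sum_n\binom nj\binom nk$ via hockey-stick (this is Liu's (3.5), quoted in the paper), and prune by the $p$-adic valuation of $\binom xk\binom{x+k}k$ on the three ranges $[0,m]$, $[m+1,p-1-m]$, $[p-m,p-1]$. Your Taylor expansion of $Q_a$ at $x=a$ is equivalent to the paper's explicit expansion of $\binom xk\binom{x+k}k$ in powers of $t=(x-m)/p$ (its Lemma~\ref{xkx+1k}); the coefficients you call $P(a),P'(a),P''(a)$ are exactly the constant, $pt$- and $(pt)^2$-terms the paper tracks.

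Two things the paper supplies that your outline leaves open. First, it uses the symmetry $s_n(x)=s_n(-1-x)$ to reduce the case $m>(p-1)/2$ to $m<(p-1)/2$ in one line; you should build this in rather than treat all $a\ne(p-1)/2$ uniformly. Second --- and this is the ``hard part'' you yourself flag --- the surviving sums are not dispatched by a single ${}_3F_2/{}_4F_3$ evaluation or WZ certificate: the paper imports three specific identities of Liu (its Lemmas~\ref{identity1}--\ref{identity2}) and two entries from Gould's tables (\eqref{Gouldlist} and \eqref{Gouldlist'}), and Lemmas~\ref{sigma1}--\ref{sigma5} apply them region by region. Each of the four surviving blocks $\sigma_1,\,2\sigma_2,\,2\sigma_3,\,\sigma_5$ needs its own computation, and the cancellation you anticipate is precisely that the $H_{2m}$-weighted pieces produced in these four blocks sum to zero. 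Your outline is sound, but those region-by-region identities \emph{are} the proof; as written this is a correct strategy with the main computational content still to be supplied. The degenerate class $m=(p-1)/2$ is handled in the paper just as you propose, by a direct argument reusing Liu's Lemma~\ref{identity2} together with $H_{(p-1)/2}^{(2)}\equiv0\pmod p$.
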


It is worth mentioning that when $x=-1/2,-1/3,-1/4,-1/6$, Z.-W. Sun \cite{SunZW2017} even conjectured supercongruences for $\sum_{n=0}^{p-1}s_n(x)^2$ in the modulus $p^4$ cases. Precisely, he conjectured that for any prime $p>3$,
\begin{align}
\sum_{k=0}^{p-1}s_k\l(-\f12\r)^2&\eq\l(\f{-1}{p}\r)(1-7p^3B_{p-3})\pmod{p^4},\label{suncon1}\\
\sum_{k=0}^{p-1}s_k\l(-\f14\r)^2&\eq\l(\f{2}{p}\r)p-26\l(\f{-2}{p}\r)p^3E_{p-3}\pmod{p^4},\label{suncon2}\\
\sum_{k=0}^{p-1}s_k\l(-\f13\r)^2&\eq p-\f{14}{3}\l(\f{p}{3}\r)p^3B_{p-2}\l(\f13\r)\pmod{p^4},\label{suncon3}\\
\sum_{k=0}^{p-1}s_k\l(-\f16\r)^2&\eq\l(\f{3}{p}\r)p-\f{155}{12}\l(\f{-1}{p}\r)p^3B_{p-2}\l(\f13\r)\pmod{p^4},\label{suncon4}
\end{align}
where $B_{p-3}$ is the $(p-3)$th Bernoulli number and $B_{p-2}(y)$ stands for the $(p-2)$th Bernoulli polynomial. \eqref{suncon1} is actually an extension of \eqref{KWcon} and was confirmed by Liu \cite{Liu2018} in 2018. By Theorem \ref{mainth}, we immediately obtained \eqref{suncon2}--\eqref{suncon4} in the modulus $p^3$ cases. For more supercongruences involving the dual sequence $s_n(x)$, we refer the reader to \cite{Guo,Liu2017,Mao,SunZH2022,WangZhong}.

The proof of Theorem \ref{mainth} has two key ingredients. One involves the $p$-adic expansion of $\binom{x}{k}\binom{x+k}{k}$ for any $p$-adic integer $x$ with $\<x\>_p\leq (p-1)/2$ and $0\leq k\leq p-1$, and the other includes many complicated combinatorial identities that reveal hidden symmetries behind the original multiple sum $\sum_{n=0}^{p-1}s_n(x)^2$. The paper is organized as follows. In the next section, we shall present some preliminary results. In Section 3, we shall prove Theorem \ref{mainth} in three cases.

\section{Preliminary results}
Let $p$ be an odd prime. Throughout the proof, for any $p$-adic integer $x$, we always write $m=\<x\>_p$ and $t=(x-m)/p$. For any nonnegative integer $n$ and positive integer $m$, the $n$th harmonic number of order $m$ is defined by
$$
H_n^{(m)}:=\sum_{k=1}^n\f{1}{k^m}.
$$
For convenience, we usually use $H_n$ to denote $H_n^{(1)}$.

In order to prove Theorem \ref{mainth}, we need the following lemmas.
\begin{lemma}\label{xkx+1k}
For any odd prime $p$ and $p$-adic integer $x$ with $m\leq(p-1)/2$, we have
\begin{align*}
\binom{x}{k}\binom{x+k}{k}\eq\begin{cases}\binom{m}{k}\binom{m+k}{k}(1+ptH_{m+k}-ptH_{m-k})\pmod{p^2},\quad&\t{if}\ 0\leq k\leq m,\vspace{8pt}\\ \f{(-1)^mp^2t(t+1)}{k(k-m)\binom{m}{p-k}\binom{k}{m}}\pmod{p^3},\quad&\t{if}\ p-m\leq k\leq p-1.\end{cases}
\end{align*}

Moreover, if $m<(p-1)/2$ and $m+1\leq k\leq p-1-m$, then we have
\begin{align*}
\binom{x}{k}\binom{x+k}{k}\eq\f{(-1)^{m+k+1}pt\binom{m+k}{k}}{(k-m)\binom{k}{m}}(1+ptH_{m+k}-ptH_{k-m-1})\pmod{p^3}.
\end{align*}
\end{lemma}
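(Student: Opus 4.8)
The plan is to write $x = m + pt$ throughout and expand $\binom{x}{k}\binom{x+k}{k}$ as a $p$-adic power series in $pt$, keeping track of the order of vanishing in $p$ that arises from the product structure of the two binomial coefficients. Recall $\binom{x}{k} = \prod_{j=1}^{k}\frac{x-j+1}{j}$ and $\binom{x+k}{k} = \prod_{j=1}^{k}\frac{x+j}{j}$, so that
\begin{equation*}
\binom{x}{k}\binom{x+k}{k} = \f{1}{(k!)^2}\prod_{j=1}^{k}(x-j+1)(x+j) = \f{1}{(k!)^2}\prod_{j=1}^{k}\bigl((m+pt)-j+1\bigr)\bigl((m+pt)+j\bigr).
\end{equation*}
For each case I would identify which factors in the numerator are divisible by $p$: a factor $(m+pt)-j+1 = pt + (m-j+1)$ is divisible by $p$ exactly when $m-j+1 \equiv 0 \pmod p$, and a factor $(m+pt)+j = pt + (m+j)$ is divisible by $p$ exactly when $m+j \equiv 0 \pmod p$. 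Since $0 \le m \le (p-1)/2$ and $0 \le k \le p-1$, one can enumerate these occurrences precisely, which pins down the exact power of $p$ dividing the product and lets one factor it out.

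For the first case $0 \le k \le m$: here no numerator factor is divisible by $p$ (the relevant residues $m-j+1$ for $1 \le j \le k$ and $m+j$ for $1 \le j \le k$ all lie in a range avoiding multiples of $p$), so the product is a $p$-adic unit times $\binom{m}{k}\binom{m+k}{k}$. Writing each factor as $(m-j+1)(1 + \frac{pt}{m-j+1})$ and $(m+j)(1 + \frac{pt}{m+j})$ and expanding the product of the correction terms to first order in $pt$ gives the factor $1 + pt\sum_{j=1}^{k}\frac{1}{m+j} - pt\sum_{j=1}^{k}\frac{1}{j-m-1}$; reindexing these sums turns them into $H_{m+k} - H_m$ and $H_{m-1} - H_{m-k-1}$ respectively, and the combination collapses to $ptH_{m+k} - ptH_{m-k}$ modulo $p^2$ after noting $H_m - (H_{m-1}-H_{m-k-1}) = H_{m-k}$ type cancellations — this is the routine bookkeeping I would not grind through. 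For the third case $m+1 \le k \le p-1-m$: now exactly one numerator factor vanishes mod $p$, namely $(m+pt) - j + 1$ with $j = m+1$ (contributing the single factor $pt$), and one checks no factor of the form $pt+(m+j)$ vanishes since $m+j < p$. Pulling out that factor of $pt$ explains the overall single power of $p$, the remaining product of residues is identified with $\frac{(-1)^{m+k+1}\binom{m+k}{k}}{(k-m)\binom{k}{m}}$ by rewriting $\prod_{1\le j\le k, j\ne m+1}(m-j+1) \cdot \prod_{j=1}^k (m+j) / (k!)^2$ in terms of factorials, and the second-order term $1 + ptH_{m+k} - ptH_{k-m-1}$ comes from the same logarithmic-derivative expansion as before, now retained to precision $p^2$ (so the whole expression is accurate mod $p^3$). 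For the second case $p-m \le k \le p-1$: here two numerator factors vanish mod $p$ — one of the form $pt + (m-j+1)$ with $j = m+1$, and one of the form $pt + (m+j)$ with $j = p-m$ (which lies in $[1,k]$ precisely because $k \ge p-m$) — giving the factor $pt(pt + (m - (p-m) + 1 + (m+(p-m)))$... more precisely the product of these two vanishing factors is $pt \cdot (pt + p) = p^2 t(t+1)$ after the residue of the second factor is $m+j - p = m+(p-m)-p = 0$, wait — the second vanishing factor is $pt + (m+j)$ with $m+j = p$, so it equals $pt + p = p(t+1)$; hence the two factors together give $pt \cdot p(t+1) = p^2 t(t+1)$, explaining the $p^2$ and the shape $t(t+1)$, and the remaining unit product of residues is matched to $\frac{(-1)^m}{k(k-m)\binom{m}{p-k}\binom{k}{m}}$ by a factorial rearrangement; since the prefactor already carries $p^2$, the unit only needs to be computed mod $p$, so no harmonic-number term survives.

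The main obstacle I anticipate is the factorial bookkeeping in cases two and three: after extracting the $p$-power, one must rewrite a product like $\prod_{j=1,\,j\ne m+1}^{k}\bigl(\text{residue of }m-j+1\bigr) \cdot \prod_{j=1}^{k}\bigl(\text{residue of }m+j\bigr)$, where the residues of $m-j+1$ for $j > m+1$ are negative numbers that must be folded to $p - |{\cdot}|$ or handled as signed quantities, into a clean closed form involving $\binom{m}{p-k}$, $\binom{k}{m}$, and $\binom{m+k}{k}$. This requires care with Wilson-type identities $(p-1)! \equiv -1$ and with the sign $(-1)^{\,\#\{\text{folded factors}\}}$; getting the sign and the exact binomial denominators right — and separately verifying that the claimed precision ($p^2$ versus $p^3$) is exactly what the expansion delivers — is where the real work lies. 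The harmonic-number correction terms themselves, by contrast, are a mechanical first-order Taylor expansion of $\prod (1 + pt/r)$ and present no conceptual difficulty.
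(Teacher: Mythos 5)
Your proposal is correct and follows essentially the same route as the paper: both expand $\binom{x}{k}\binom{x+k}{k}$ as a product of linear factors in $x=m+pt$, isolate the factors $pt$ (from $j=m+1$) and $p(t+1)$ (from $j=p-m$) that account for the exact power of $p$ in each range of $k$, and get the harmonic-number corrections from a first-order expansion (the paper merely organizes this via $F(x)=F(pt)\prod_{y}F(y+1)/F(y)$ instead of expanding the product directly). The factorial/Wilson bookkeeping you defer does close as claimed (e.g.\ $k!\,(p-k)!\equiv(-1)^k k\pmod p$ reconciles your unit with $(-1)^m/\bigl(k(k-m)\binom{m}{p-k}\binom{k}{m}\bigr)$), and despite a small indexing slip in your intermediate reindexing for $0\le k\le m$, the final combinations $H_{m+k}-H_{m-k}$ and $H_{m+k}-H_{k-m-1}$ you state are the right ones.
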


\begin{proof}
Set
$$
F(y):=\binom{y}{k}\binom{y+k}{k}.
$$
It is easy to check that
$$
\f{F(y+1)}{F(y)}=\f{y+1+k}{y+1-k}.
$$
Then we have
\begin{align*}
F(x)=F(pt)\cdot \f{F(m+pt)}{F(pt)}=F(pt)\prod_{y=pt}^{m+pt-1}\f{F(y+1)}{F(y)}=F(pt)\prod_{y=pt}^{m+pt-1}\f{y+1+k}{y+1-k}.
\end{align*}
Note that for $k\in\{0,1,2,\ldots,p-1\}$,
\begin{equation}\label{Fpt}
F(pt)=\binom{pt}{k}\binom{pt+k}{k}=\f{pt}{pt-k}\prod_{i=1}^k\f{p^2t^2-i^2}{i^2}=\f{(-1)^kpt}{pt-k}\prod_{i=1}^{k}\l(1-\f{p^2t^2}{i^2}\r).
\end{equation}
Furthermore,
\begin{equation}\label{Fy}
\prod_{y=pt}^{m+pt-1}\f{y+1+k}{y+1-k}=(-1)^m\f{\prod_{i=k+1}^{k+m}(i+pt)}{\prod_{i=k-m}^{k-1}(i-pt)}.
\end{equation}

{\it Case 1}. $0\leq k\leq m$.

Clearly, the product $\prod_{i=k-m}^k(i-pt)$ contains the factor $-pt$. Combining \eqref{Fpt} and \eqref{Fy}, we obtain
\begin{align*}
F(x)&=(-1)^{m+k+1}\prod_{i=1}^{k}\l(1-\f{p^2t^2}{i^2}\r)\f{\prod_{i=k+1}^{k+m}(i+pt)}{\prod_{\substack{i=k-m\\ i\neq0}}^{k}(i-pt)}\\
&=\prod_{i=1}^{k}\l(1-\f{p^2t^2}{i^2}\r)\f{\prod_{i=k+1}^{k+m}(i+pt)}{(\prod_{i=1}^{m-k}(i+pt))(\prod_{i=1}^{k}(i-pt))}\\
&\eq \f{(\prod_{i=k+1}^{k+m}i)(1+ptH_{k+m}-ptH_k)}{(\prod_{i=1}^{m-k}i)(\prod_{i=1}^{k}i)(1+ptH_{m-k})(1-ptH_k)}\\
&\eq\binom{m}{k}\binom{m+k}{k}(1+ptH_{m+k}-ptH_{m-k})\pmod{p^2}.
\end{align*}

{\it Case 2}. $p-m\leq k\leq p-1$.

Now, the product $\prod_{i=k+1}^{k+m}(i+pt)$ contains the factor $p(t+1)$. Therefore, in view of \eqref{Fpt} and \eqref{Fy}, we deduce that
\begin{align*}
F(x)&=(-1)^{m+k+1}p^2t(t+1)\prod_{i=1}^{k}\l(1-\f{p^2t^2}{i^2}\r)\f{\prod_{\substack{i=k+1\\ i\neq p}}^{k+m}(i+pt)}{\prod_{i=k-m}^{k}(i-pt)}\\
&\eq(-1)^{m+k+1}p^2t(t+1)\f{(\prod_{i=k+1}^{p-1}i)(\prod_{i=1}^{k+m-p}i)}{\prod_{i=k-m}^{k}i}\\
&\eq\f{(-1)^mp^2t(t+1)}{k(k-m)\binom{m}{p-k}\binom{k}{m}}\pmod{p^3}.
\end{align*}

{\it Case 3}. $m<(p-1)/2$ and $m+1\leq k\leq p-1-m$.

In this case, both $\prod_{i=k+1}^{k+m}(i+pt)$ and $\prod_{i=k-m}^{k}(i+pt)$ are coprime with $p$. By \eqref{Fpt} and \eqref{Fy}, we have
\begin{align*}
F(x)&=(-1)^{m+k+1}pt\l(1-\f{p^2t^2}{i^2}\r)\f{\prod_{i=k+1}^{k+m}(i+pt)}{\prod_{i=k-m}^{k}(i-pt)}\\
&\eq(-1)^{m+k+1}pt\f{(\prod_{i=k+1}^{k+m}i)(1+ptH_{m+k}-ptH_k)}{(\prod_{i=k-m}^{k}i)(1-ptH_k+ptH_{k-m-1})}\\
&\eq\f{(-1)^{m+k+1}pt\binom{m+k}{k}}{(k-m)\binom{k}{m}}(1+ptH_{m+k}-ptH_{k-m-1})\pmod{p^3}.
\end{align*}

In view of the above, we are done.
\end{proof}

\begin{lemma}[Liu {\cite[(2.2)]{Liu2018}}]\label{identity1}
For nonnegative integers $j$ and $k$, we have
$$
\sum_{n=0}^{j+k}\f{(-1)^n}{n+1}\binom{n}{j}\binom{j}{n-k}=\f{(-1)^{j+k}}{(j+k+1)\binom{j+k}{j}}.
$$
\end{lemma}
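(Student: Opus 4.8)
Lemma~\ref{identity1} is quoted from Liu's paper; should one wish a self-contained derivation, the cleanest route is to recognize the left-hand side as a terminating Saalsch\"utzian ${}_3F_2$ at argument $1$ and finish by the Pfaff--Saalsch\"utz summation. First I would note that both sides are symmetric in $j$ and $k$: indeed $\binom{n}{j}\binom{j}{n-k}=\f{n!}{(n-j)!(n-k)!(j+k-n)!}=\binom{n}{k}\binom{k}{n-j}$, while $\f{(-1)^n}{n+1}$ and $\f{(-1)^{j+k}}{(j+k+1)\binom{j+k}{j}}$ are visibly symmetric; hence we may assume $k\ge j$. Since $\binom{n}{j}$ vanishes unless $n\ge j$ and $\binom{j}{n-k}$ vanishes unless $k\le n\le j+k$, only the terms with $k\le n\le j+k$ contribute, and the substitution $n=k+i$ turns the sum into
$$
\sum_{n=0}^{j+k}\f{(-1)^n}{n+1}\binom{n}{j}\binom{j}{n-k}=(-1)^k\sum_{i=0}^{j}\f{(-1)^i}{k+i+1}\binom{j}{i}\binom{k+i}{j}.
$$

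Next I would pass to hypergeometric form. Using $\binom{j}{i}=(-1)^i(-j)_i/i!$, $\binom{k+i}{j}=\binom{k}{j}(k+1)_i/(k-j+1)_i$ (valid because $k\ge j$), and $1/(k+i+1)=\big(1/(k+1)\big)(k+1)_i/(k+2)_i$, the right-hand side above equals
$$
\f{(-1)^k\binom{k}{j}}{k+1}\sum_{i=0}^{j}\f{(-j)_i(k+1)_i(k+1)_i}{(k+2)_i(k-j+1)_i\,i!}=\f{(-1)^k\binom{k}{j}}{k+1}\,{}_3F_2\!\l(\begin{matrix}-j,\ k+1,\ k+1\\ k+2,\ k-j+1\end{matrix};\,1\r).
$$
This series terminates (the numerator parameter $-j$ is a nonpositive integer) and is $1$-balanced, since $(k+2)+(k-j+1)=1+(k+1)+(k+1)-j$; so the Pfaff--Saalsch\"utz theorem evaluates it as $(1)_j(1)_j/\big((k+2)_j(-k)_j\big)=(j!)^2/\big((k+2)_j(-k)_j\big)$.

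It would then remain only to simplify. With $(k+2)_j=(k+j+1)!/(k+1)!$, $(-k)_j=(-1)^jk!/(k-j)!$ and $\binom{k}{j}=k!/(j!(k-j)!)$, the product collapses to $(-1)^{j+k}j!\,k!/(j+k+1)!=(-1)^{j+k}/\big((j+k+1)\binom{j+k}{j}\big)$, as required. A hypergeometric-free alternative goes through the Beta integral: write $1/(n+1)=\int_0^1 x^n\,dx$, observe via the binomial theorem that $\sum_n\binom{n}{j}\binom{j}{n-k}(-x)^n=(-x)^k[z^j](1+z)^k(1-x-xz)^j$, integrate term by term, and collapse the outcome using $\sum_{l\ge0}\binom{k+l}{l}(-z)^l=(1+z)^{-(k+1)}$, which makes $(1+z)^k\sum_l\binom{k+l}{l}(-z)^l=1/(1+z)$ with $z^j$-coefficient $(-1)^j$. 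There is essentially no genuine obstacle here: the only points demanding care are the symmetry reduction to $k\ge j$, the verification that the ${}_3F_2$ both terminates and is $1$-balanced so that Pfaff--Saalsch\"utz legitimately applies, and the sign bookkeeping in the Pochhammer symbol $(-k)_j$.
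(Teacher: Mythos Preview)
Your proof is correct. The paper does not prove Lemma~\ref{identity1} at all; it simply quotes the identity as \cite[(2.2)]{Liu2018} and uses it as a black box in Lemmas~\ref{sigma1}--\ref{sigma3}. So there is nothing to compare against on the paper's side.

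Your argument is clean and the checks go through: the symmetry reduction to $k\ge j$ is justified by the trinomial rewriting of $\binom{n}{j}\binom{j}{n-k}$; the passage to Pochhammer form is valid because $k-j+1\ge 1$ keeps $(k-j+1)_i$ nonzero; the ${}_3F_2$ is indeed terminating (upper parameter $-j$) and Saalsch\"utzian, and applying Pfaff--Saalsch\"utz with $n=j$, $a=b=k+1$, $c=k+2$ gives $(1)_j(1)_j/\big((k+2)_j(-k)_j\big)$, where $(-k)_j=(-1)^jk!/(k-j)!$ is nonvanishing since $k\ge j$. The final simplification to $(-1)^{j+k}j!\,k!/(j+k+1)!$ is routine. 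The Beta-integral alternative you sketch also works, though the coefficient-extraction step deserves one more line of detail if you actually present it.
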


\begin{lemma}[Liu {\cite[(2.4)]{Liu2018}}]\label{identity3}
For nonnegative integers $M$ and $k$, we have
$$
\sum_{j=0}^M\f{(-1)^j\binom{M}{j}\binom{M+j}{j}}{(j+k+1)\binom{j+k}{j}}=\f{\binom{k}{M}}{(k+1)\binom{M+k+1}{M}}.
$$
\end{lemma}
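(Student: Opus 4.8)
The statement to be proved is the binomial identity of Lemma~\ref{identity3}, and I would establish it by recognizing the left-hand sum as a terminating Gauss hypergeometric series that collapses under the Chu--Vandermonde theorem. First I would convert each factor in the summand into Pochhammer symbols $(a)_j = a(a+1)\cdots(a+j-1)$: one has $\binom{M}{j} = (-1)^j(-M)_j/j!$, $\binom{M+j}{j} = (M+1)_j/j!$, and $\binom{j+k}{j} = (k+1)_j/j!$, while $\frac{1}{j+k+1} = \frac{1}{k+1}\cdot\frac{(k+1)_j}{(k+2)_j}$. Substituting these, cancelling the two signs $(-1)^j$ and the common factor $(k+1)_j$, the summand simplifies to $\frac{1}{k+1}\cdot\frac{(-M)_j(M+1)_j}{(k+2)_j\,j!}$, so that the entire left-hand side equals $\frac{1}{k+1}\,{}_2F_1(-M,M+1;k+2;1)$.

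Since the factor $(-M)_j$ vanishes for $j>M$, this series terminates, and I would apply the Chu--Vandermonde identity ${}_2F_1(-n,b;c;1) = (c-b)_n/(c)_n$ with $n=M$, $b=M+1$, $c=k+2$, which gives $\frac{(k+1-M)_M}{(k+2)_M}$. Using $(k+1-M)_M = k!/(k-M)!$ and $(k+2)_M = (M+k+1)!/(k+1)!$ and carrying the prefactor $\frac{1}{k+1}$ through, the left-hand side becomes $\frac{(k!)^2}{(k-M)!\,(M+k+1)!}$; a direct factorial expansion of the right-hand side $\frac{\binom{k}{M}}{(k+1)\binom{M+k+1}{M}}$ produces exactly the same expression, finishing the proof.

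I do not anticipate a genuine obstacle, since the argument is a routine hypergeometric evaluation; the one point deserving care is the degenerate range $k<M$, where the right side vanishes because $\binom{k}{M}=0$, and one should check consistency with Chu--Vandermonde, which holds because $(k+1-M)_M$ then contains a vanishing factor at index $M-1-k$. As an independent alternative I would record an analytic proof: the Beta-integral evaluation $\frac{1}{(j+k+1)\binom{j+k}{j}} = \int_0^1 x^j(1-x)^k\,dx$ turns the sum into $(-1)^M\int_0^1 (1-x)^k P_M^*(x)\,dx$, where $P_M^*(x) = \sum_{j=0}^M(-1)^{M+j}\binom{M}{j}\binom{M+j}{j}x^j$ is the shifted Legendre polynomial; the Rodrigues formula $P_M^*(x) = \frac{1}{M!}\frac{d^M}{dx^M}[x^M(x-1)^M]$ together with $M$-fold integration by parts evaluates this integral to the same closed form.
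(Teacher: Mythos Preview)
Your proof is correct. The conversion to Pochhammer symbols is accurate, the identification with ${}_2F_1(-M,M+1;k+2;1)$ is right, and the application of Chu--Vandermonde together with the factorial bookkeeping checks out, including the degenerate case $k<M$ where both sides vanish.

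There is nothing to compare against in the paper itself: Lemma~\ref{identity3} is simply quoted from Liu \cite[(2.4)]{Liu2018} without proof. Your argument therefore supplies a self-contained justification where the paper relies on citation. The alternative Beta-integral/Rodrigues approach you sketch is also valid and gives a second independent derivation.
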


\begin{lemma}[Liu {\cite[Lemma 2.2]{Liu2018}}]\label{identity2}
For any nonnegative integer $M$, we have
\begin{align*}
&\sum_{j=0}^M\sum_{k=0}^M\sum_{n=0}^{j+k}\f{1}{n+1}\binom{M}{j}\binom{M+j}{j}\binom{M}{k}\binom{M+k}{k}\binom{n}{j}\binom{j}{n-k}\binom{y-1}{k}\\
&\quad=\f{(-1)^M}{2M+1}(1-4y^2H_M^{(2)})+\mathcal{O}(y^4).
\end{align*}
\end{lemma}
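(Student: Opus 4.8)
The plan is to treat the left-hand side as a polynomial in $y$ (note that $\binom{y-1}{k}$ has degree $k\le M$) and to compute its Taylor expansion at $y=0$ only through order $y^{3}$, since the assertion is stated modulo $y^{4}$. The guiding idea is to collapse the entire triple sum to a single explicit term by applying the two preceding summation lemmas in succession, holding $y$ symbolic, and only at the very end to expand in $y$. Thus I would first peel off the summation over $n$: for each fixed pair $(j,k)$ the inner $n$-sum is exactly of the shape evaluated by Lemma \ref{identity1}, so it reduces in closed form to $\frac{(-1)^{j+k}}{(j+k+1)\binom{j+k}{j}}$. This removes the index $n$ entirely and leaves the double sum $\sum_{j,k}\binom{M}{j}\binom{M+j}{j}\binom{M}{k}\binom{M+k}{k}\binom{y-1}{k}\,\frac{(-1)^{j+k}}{(j+k+1)\binom{j+k}{j}}$.

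Next I would carry out the summation over $j$. Writing $(-1)^{j+k}=(-1)^{j}(-1)^{k}$ and pulling the $k$-dependent factors outside, the inner $j$-sum is precisely $\sum_{j}\frac{(-1)^{j}\binom{M}{j}\binom{M+j}{j}}{(j+k+1)\binom{j+k}{j}}$, which Lemma \ref{identity3} evaluates as $\frac{\binom{k}{M}}{(k+1)\binom{M+k+1}{M}}$. The decisive structural feature is that $\binom{k}{M}$ vanishes for every $k$ in the range $0\le k\le M$ except $k=M$; hence the remaining outer sum over $k$ degenerates to its single term $k=M$. Using $\binom{M}{M}\binom{2M}{M}\big/\!\big[(M+1)\binom{2M+1}{M}\big]=1/(2M+1)$, the whole triple sum therefore collapses to $\frac{(-1)^{M}}{2M+1}$ times the surviving $y$-factor evaluated at $k=M$. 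In effect, all the apparent complexity of the multiple sum disappears, and the problem is reduced to expanding one factor.

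It then remains to expand that surviving factor through order $y^{3}$, and this is where the harmonic numbers enter and where I expect the main obstacle to lie. One must verify two things: that the coefficients of $y^{1}$ and $y^{3}$ cancel, and that the coefficient of $y^{2}$ condenses to exactly $-4H_{M}^{(2)}$. The vanishing of the odd-order terms should reflect the invariance of the relevant product under $y\mapsto -y$, while the second-order coefficient is extracted from the standard expansion $\prod_{i=1}^{M}\big(1-4y^{2}/i^{2}\big)=1-4y^{2}H_{M}^{(2)}+\mathcal{O}(y^{4})$; assembling these with the prefactor $\frac{(-1)^{M}}{2M+1}$ yields the claimed right-hand side $\frac{(-1)^{M}}{2M+1}\big(1-4y^{2}H_{M}^{(2)}\big)+\mathcal{O}(y^{4})$. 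The genuinely delicate point is the harmonic bookkeeping in this last expansion: one needs the second elementary-symmetric combination of $1,2,\dots,M$ to reorganize into the single sum $H_{M}^{(2)}$ rather than into a mixed expression involving $H_{M}^{2}$, and it is the symmetric (even) form of the factor at $k=M$ that makes this collapse, together with the cancellation of all odd powers, go through.
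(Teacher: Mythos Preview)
Your argument has a genuine gap at the very first step. Note first that the last binomial in the statement should read $\binom{y-1}{n}$, not $\binom{y-1}{k}$: compare how the lemma is invoked with $y=p$ in \eqref{sigma1decom1} and \eqref{msigma1key1}, where the factor is $\binom{p-1}{n}$. With $\binom{y-1}{n}$ present, the inner $n$-sum is
\[
\sum_{n=0}^{j+k}\frac{1}{n+1}\binom{n}{j}\binom{j}{n-k}\binom{y-1}{n},
\]
and Lemma~\ref{identity1} does \emph{not} evaluate this for symbolic $y$: that lemma handles only the special value $\binom{y-1}{n}=\binom{-1}{n}=(-1)^n$, i.e.\ $y=0$. (If one instead takes the literal $\binom{y-1}{k}$, the situation is no better: $\binom{y-1}{k}$ pulls outside the $n$-sum, leaving $\sum_n \frac{1}{n+1}\binom{n}{j}\binom{j}{n-k}$ with no alternating sign, and Lemma~\ref{identity1} again does not apply; already for $j=0$, $k=1$ the two expressions differ.) Hence your ``holding $y$ symbolic'' collapse to the single term $k=M$ via Lemmas~\ref{identity1} and~\ref{identity3} is valid only for the constant term in $y$; the coefficients of $y,y^2,y^3$ receive contributions from every $0\le k\le M$, not just from $k=M$.

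In particular, the ``surviving factor'' is not $\prod_{i=1}^{M}(1-4y^2/i^2)$. If the lemma really reduced to $\frac{(-1)^M}{2M+1}\binom{y-1}{M}$, then since $(-1)^M\binom{y-1}{M}=\prod_{i=1}^M(1-y/i)=1-yH_M+\tfrac{1}{2}(H_M^2-H_M^{(2)})y^2+\cdots$, the linear term would \emph{not} vanish and the $y^2$-coefficient would be wrong. The actual argument (Liu, Lemma~2.2, which the paper merely cites) proceeds by first expanding $\binom{y-1}{n}=(-1)^n\bigl(1-yH_n+\cdots\bigr)$ through order $y^3$ and then evaluating each coefficient of $y^r$ separately: the $y^0$-part collapses via Lemmas~\ref{identity1} and~\ref{identity3} exactly as you describe, but the $y^1$-, $y^2$-, $y^3$-parts require additional harmonic-number identities to show the odd coefficients vanish and the $y^2$-coefficient equals $-\frac{4(-1)^M}{2M+1}H_M^{(2)}$.
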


\begin{lemma}\label{sigma1}
For any odd prime $p$ and $p$-adic integer $x$ with $m<(p-1)/2$, we have
\begin{align*}
&p\sum_{j=0}^m\sum_{k=0}^m\binom{x}{j}\binom{x+j}{j}\binom{x}{k}\binom{x+k}{k}\sum_{n=0}^{j+k}\f{1}{n+1}\binom{n}{j}\binom{j}{n-k}\binom{p-1}{n}\\
&\quad\eq \f{p(-1)^m}{2m+1}+\f{2p^2t(-1)^m}{2m+1}H_{2m}\pmod{p^3}.
\end{align*}
\end{lemma}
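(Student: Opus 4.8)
The plan is to reduce everything to the combinatorial identities in Lemmas~\ref{identity1}--\ref{identity2}. Since the whole expression carries the outer factor $p$, it suffices to evaluate
\[
\Sigma:=\sum_{j=0}^m\sum_{k=0}^m\binom{x}{j}\binom{x+j}{j}\binom{x}{k}\binom{x+k}{k}\sum_{n=0}^{j+k}\f{1}{n+1}\binom{n}{j}\binom{j}{n-k}\binom{p-1}{n}
\]
modulo $p^2$. Because $0\le j,k\le m$, the first case of Lemma~\ref{xkx+1k} lets us replace $\binom{x}{j}\binom{x+j}{j}$ by $\binom{m}{j}\binom{m+j}{j}\bigl(1+ptH_{m+j}-ptH_{m-j}\bigr)$ modulo $p^2$ (and similarly for the factor in $k$), and we also use $\binom{p-1}{n}\eq(-1)^n\bigl(1-pH_n\bigr)\pmod{p^2}$, which is legitimate since $n\le 2m<p-1$ makes every $H_n$ that occurs a $p$-adic integer. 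Multiplying these expansions out and discarding all terms that are $\mathcal{O}(p^2)$, I would arrive at
\[
\Sigma\eq\Sigma_0+2p\,\Sigma_1-p\,\Sigma_2\pmod{p^2},
\]
where $\Sigma_0,\Sigma_1,\Sigma_2$ all carry the weight $\binom{m}{j}\binom{m+j}{j}\binom{m}{k}\binom{m+k}{k}$; the inner $n$-sum is $\sum_{n}\f{(-1)^n}{n+1}\binom{n}{j}\binom{j}{n-k}$ in $\Sigma_0$ and $\Sigma_1$ (with $\Sigma_1$ carrying in addition the factor $t(H_{m+j}-H_{m-j})$) and $\sum_{n}\f{(-1)^nH_n}{n+1}\binom{n}{j}\binom{j}{n-k}$ in $\Sigma_2$. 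The coefficient $2$ of $\Sigma_1$ appears because the correction coming from the $j$-factor and the one coming from the $k$-factor become equal after the inner sum is evaluated.

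For $\Sigma_0$ and $\Sigma_1$ I would first apply Lemma~\ref{identity1} to collapse the inner $n$-sum to $\f{(-1)^{j+k}}{(j+k+1)\binom{j+k}{j}}$, which is symmetric in $j$ and $k$; this symmetry is precisely what makes the two corrections in $\Sigma_1$ coincide. Next I would apply Lemma~\ref{identity3} with $M=m$: in $\Sigma_0$ to the sum over $j$, and in $\Sigma_1$, after factoring out the part depending only on $j$, to the sum over $k$. Each application introduces a factor $\binom{k}{m}$ (respectively $\binom{j}{m}$) in the numerator, which vanishes unless $k=m$ (respectively $j=m$), since the summation index only runs up to $m$; hence the double sums collapse to a single surviving term. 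A short computation, using $(m+1)\binom{2m+1}{m}=(2m+1)\binom{2m}{m}$ and $H_{m-m}=H_0=0$, then gives $\Sigma_0=\f{(-1)^m}{2m+1}$ and $\Sigma_1=\f{t(-1)^m}{2m+1}H_{2m}$.

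Finally, $\Sigma_2$ is, up to sign, a coefficient in the generating identity of Lemma~\ref{identity2}: expanding $\binom{y-1}{n}=(-1)^n\bigl(1-H_ny+\mathcal{O}(y^2)\bigr)$, the coefficient of $y^1$ on the left-hand side of Lemma~\ref{identity2} (with $M=m$) equals $-\Sigma_2$, while on the right-hand side it is $0$, since the expansion there begins $\f{(-1)^m}{2m+1}\bigl(1-4y^2H_m^{(2)}\bigr)+\mathcal{O}(y^4)$. Therefore $\Sigma_2=0$, so $p\,\Sigma_2\eq0\pmod{p^2}$, and we conclude $\Sigma\eq\f{(-1)^m}{2m+1}+\f{2pt(-1)^m}{2m+1}H_{2m}\pmod{p^2}$; multiplying by $p$ gives the asserted congruence modulo $p^3$. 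I expect the most delicate step to be the first one: one must carry the $p$-adic expansions of $\binom{x}{j}\binom{x+j}{j}$ and of $\binom{p-1}{n}$ to exactly the right precision (the $H_n$-contribution is needed only to leading order), and then recognise that after Lemma~\ref{identity1} the inner sum is symmetric, so that Lemma~\ref{identity3} applied in the other variable forces the collapse to $j=m$ or $k=m$. Once this structure is in place, the remaining computations are routine.
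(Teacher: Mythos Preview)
Your proposal is correct and follows essentially the same route as the paper: both use Lemma~\ref{xkx+1k} for the $p$-adic expansion, the symmetry in $j,k$ to double the correction term, and Lemmas~\ref{identity1}--\ref{identity3} to collapse the resulting sums to the single surviving term $j=m$ (or $k=m$). The only organizational difference is that the paper applies Lemma~\ref{identity2} directly at $y=p$ to the main term, which in one stroke yields your $\Sigma_0=\f{(-1)^m}{2m+1}$ and $\Sigma_2=0$ (since the right-hand side of Lemma~\ref{identity2} has no linear term in $y$); you instead expand $\binom{p-1}{n}$ first and then read off the constant and linear coefficients of Lemma~\ref{identity2} separately, which is equivalent but slightly more work.
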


\begin{proof}
Since $0\leq j,k\leq m<(p-1)/2$, we have $1\leq n+1\leq j+k+1\leq p-1$. By Lemma \ref{xkx+1k},
\begin{align}\label{sigma1decom}
&p\sum_{j=0}^m\sum_{k=0}^m\binom{x}{j}\binom{x+j}{j}\binom{x}{k}\binom{x+k}{k}\sum_{n=0}^{j+k}\f{1}{n+1}\binom{n}{j}\binom{j}{n-k}\binom{p-1}{n}\notag\\
&\quad\eq p\sum_{j=0}^m\sum_{k=0}^m\binom{m}{j}\binom{m+j}{j}\binom{m}{k}\binom{m+k}{k}(1+pt(H_{m+j}+H_{m+k}-H_{m-j}-H_{m-k}))\notag\\
&\qquad\times\sum_{n=0}^{j+k}\f{1}{n+1}\binom{n}{j}\binom{j}{n-k}\binom{p-1}{n}\notag\\
&\quad \eq p\sum_{j=0}^m\sum_{k=0}^m\binom{m}{j}\binom{m+j}{j}\binom{m}{k}\binom{m+k}{k}(1+2p t(H_{m+k}-H_{m-k}))\notag\\
&\qquad\times\sum_{n=0}^{j+k}\f{1}{n+1}\binom{n}{j}\binom{j}{n-k}\binom{p-1}{n}\pmod{p^3},
\end{align}
where in the last step we have used the symmetry of $j$ and $k$. By Lemma \ref{identity2} with $M=m,\ y=p$, we obtain
\begin{equation}\label{sigma1decom1}
p\sum_{j=0}^m\sum_{k=0}^m\binom{m}{j}\binom{m+j}{j}\binom{m}{k}\binom{m+k}{k}\sum_{n=0}^{j+k}\f{1}{n+1}\binom{n}{j}\binom{j}{n-k}\binom{p-1}{n}\eq \f{p(-1)^m}{2m+1}\pmod{p^3}.
\end{equation}
Moreover, with the help of Lemmas \ref{identity1} and \ref{identity3}, we get
\begin{align}\label{sigma1decom2}
&2p^2t\sum_{j=0}^m\sum_{k=0}^m\binom{m}{j}\binom{m+j}{j}\binom{m}{k}\binom{m+k}{k}(H_{m+k}-H_{m-k})\sum_{n=0}^{j+k}\f{1}{n+1}\binom{n}{j}\binom{j}{n-k}\binom{p-1}{n}\notag\\
&\quad\eq 2p^2t\sum_{j=0}^m\sum_{k=0}^m\binom{m}{j}\binom{m+j}{j}\binom{m}{k}\binom{m+k}{k}(H_{m+k}-H_{m-k})\sum_{n=0}^{j+k}\f{(-1)^n}{n+1}\binom{n}{j}\binom{j}{n-k}\notag\\
&\quad=2p^2t\sum_{j=0}^m\sum_{k=0}^m\binom{m}{j}\binom{m+j}{j}\binom{m}{k}\binom{m+k}{k}(H_{m+k}-H_{m-k})\f{(-1)^{j+k}}{(j+k+1)\binom{j+k}{j}}\notag\\
&\quad=2p^2t\sum_{k=0}^m(-1)^k\binom{m}{k}\binom{m+k}{k}(H_{m+k}-H_{m-k})\f{\binom{k}{m}}{(k+1)\binom{m+k+1}{m}}\notag\\
&\quad=\f{2p^2t(-1)^m}{2m+1}H_{2m}\pmod{p^3}.
\end{align}
Substituting \eqref{sigma1decom1} and \eqref{sigma1decom2} into \eqref{sigma1decom}, we finally obtain the desired result.
\end{proof}

\begin{lemma}\label{sigma2}
For any odd prime $p$ and $p$-adic integer $x$ with $m<(p-1)/2$, we have
\begin{align*}
&p\sum_{j=0}^m\sum_{k=m+1}^{p-m-1}\binom{x}{j}\binom{x+j}{j}\binom{x}{k}\binom{x+k}{k}\sum_{n=0}^{j+k}\f{1}{n+1}\binom{n}{j}\binom{j}{n-k}\binom{p-1}{n}\\
&\quad\eq \f{pt(-1)^m}{2m+1}-\f{p^2t(-1)^m}{(2m+1)^2}-\f{p^2t^2(-1)^m}{(2m+1)^2}-\f{2p^2t(-1)^mH_{2m}}{2m+1}\pmod{p^3}.
\end{align*}
\end{lemma}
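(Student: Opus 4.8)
The plan is to rewrite the triple sum so that the role of the ``boundary'' index becomes visible, to peel off the single term responsible for the main $O(p)$ contribution, and then to treat the two resulting pieces separately by the identities already collected. Since $0\le j\le m$, $m+1\le k\le p-1-m$ and hence $0\le n\le j+k\le p-1$, the relation $p\cdot\f1{n+1}\binom{p-1}n=\binom p{n+1}$ lets us rewrite the left-hand side as
$$\Sigma:=\sum_{j=0}^m\sum_{k=m+1}^{p-1-m}\binom xj\binom{x+j}j\binom xk\binom{x+k}k\sum_{n=0}^{j+k}\binom p{n+1}\binom nj\binom j{n-k}.$$
When $n\le p-2$ the coefficient $\binom p{n+1}$ is divisible by $p$, and $\binom xk\binom{x+k}k$ is divisible by $p$ by the third case of Lemma \ref{xkx+1k}, so every such term is divisible by $p^2$. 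The only term with $n=p-1$ forces $j+k=p-1$, hence $j=m$, $k=p-1-m$ and $\binom j{n-k}=\binom mm=1$; writing $\Sigma'$ for this term, we get $\Sigma=\Sigma'+\Sigma''$ with $\Sigma''\eq0\pmod{p^2}$ and
$$\Sigma'=\binom xm\binom{x+m}m\binom x{p-1-m}\binom{x+p-1-m}{p-1-m}\binom{p-1}m.$$

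To evaluate $\Sigma'$ modulo $p^3$, I would insert the first case of Lemma \ref{xkx+1k} for $\binom xm\binom{x+m}m$ (so that it is $\eq\binom{2m}m(1+ptH_{2m})\pmod{p^2}$) and the third case of Lemma \ref{xkx+1k}, kept to full modulus $p^3$, for $\binom x{p-1-m}\binom{x+p-1-m}{p-1-m}$ (here $m+k=p-1$, $k-m=p-1-2m$ and $H_{k-m-1}=H_{p-2-2m}$); one then expands $\binom{p-1}m\eq(-1)^m(1-pH_m)$, $\binom{p-1-m}m\eq(-1)^m\binom{2m}m(1-p(H_{2m}-H_m))$ and $(p-1-2m)^{-1}\eq-\f1{2m+1}-\f p{(2m+1)^2}$ modulo $p^2$, using also the elementary congruences $H_{p-1}\eq0$ and $H_{p-2-2m}\eq H_{2m+1}\pmod p$. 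A short computation collapses this to
$$\Sigma'\eq\f{pt(-1)^m}{2m+1}+\f{p^2t(-1)^m(H_{2m}-3H_m)}{2m+1}+\f{p^2t(-1)^m}{(2m+1)^2}-\f{p^2t^2(-1)^m}{(2m+1)^2}\pmod{p^3}.$$

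Since $\Sigma''\eq0\pmod{p^2}$, to find it modulo $p^3$ it suffices to replace $\binom xj\binom{x+j}j$ by $\binom mj\binom{m+j}j$, $\binom xk\binom{x+k}k$ by $\f{(-1)^{m+k+1}pt\binom{m+k}k}{(k-m)\binom km}$ and $\binom p{n+1}$ by $\f{(-1)^n p}{n+1}$; this exhibits $\Sigma''$ as $p^2t$ times an iterated sum that need only be evaluated modulo $p$. There the inner $n$-sum $\sum_{n\le\min(j+k,p-2)}\f{(-1)^n}{n+1}\binom nj\binom j{n-k}$ is, for $(j,k)\neq(m,p-1-m)$, the full sum $\f{(-1)^{j+k}}{(j+k+1)\binom{j+k}j}$ of Lemma \ref{identity1}, whereas for $(j,k)=(m,p-1-m)$ the $n=p-1$ term must be removed, leaving $\f1p\big(\tfrac1{\binom{p-1}m}-\binom{p-1}m\big)\eq2(-1)^mH_m\pmod p$. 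Feeding the closed form into the $j$-sum and applying Lemma \ref{identity3} with $M=m$ reduces it to $\f{\binom km}{(k+1)\binom{m+k+1}m}$ (less the $j=m$ term when $k=p-1-m$); the surviving $k$-sum telescopes via $\f1{(k-m)(k+m+1)}=\f1{2m+1}\big(\tfrac1{k-m}-\tfrac1{k+m+1}\big)$ to $\f1{2m+1}(H_{p-2-2m}-H_{p-1}+H_{2m+1})$, and the leftover $k=p-1-m$ piece simplifies, using $\binom{p-1}m/\binom pm=(p-m)/p$, to $\f1{p(p-1-2m)}\big(1-(-1)^m\binom{2m}m/\binom{p-1-m}m\big)$. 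Collecting everything with the congruences above gives
$$\Sigma''\eq-\f{3p^2t(-1)^m(H_{2m}-H_m)}{2m+1}-\f{2p^2t(-1)^m}{(2m+1)^2}\pmod{p^3},$$
and $\Sigma'+\Sigma''$ is exactly the right-hand side of the lemma.

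The hard part will be the bookkeeping around the index $k=p-1-m$: it is at once the value of $k$ at which the ranges of both $n$ and $j$ lose one term and the value at which denominators such as $\binom pm$ and $j+k+1=p$ acquire a factor of $p$. One has to carry several of the binomial expansions one order beyond what is naively needed and to use relations such as $\binom{p-1}m=\tfrac{p-m}m\binom{p-1}{m-1}$ in order to see that the apparent $1/p$ singularities cancel, and then to keep the $p^2t$-order terms straight throughout the iterated summation.
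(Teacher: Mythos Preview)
Your argument is correct and uses the same toolbox as the paper (Lemma~\ref{xkx+1k}, Lemma~\ref{identity1}, Lemma~\ref{identity3}, partial fractions and the standard congruences $H_{p-1}\eq0$, $H_{p-1-k}\eq H_k\pmod p$), but the decomposition is genuinely different.

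The paper never isolates the $n=p-1$ term. Instead it expands $\binom{p-1}{n}\eq(-1)^n(1-pH_n)\pmod{p^2}$, observes that $H_n/(n+1)$ is a $p$-adic integer so the $pH_n$ piece drops out at the required precision, and applies Lemma~\ref{identity1} to the \emph{full} $n$-sum. This leaves the product of the $p$-adic expansions from Lemma~\ref{xkx+1k} times $\f{(-1)^{j+k}}{(j+k+1)\binom{j+k}{j}}$; the factor $1+pt(\text{harmonics})$ is then split into its constant and harmonic parts. The constant part runs straight through Lemma~\ref{identity3} and a telescoping $k$-sum (in which the $1/p$ at $k=p-1-m$ simply produces the leading $-p$ contribution), while the harmonic part is automatically localized to $(j,k)=(m,p-1-m)$ because of the extra factor~$p$. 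No separate treatment of the endpoint is ever needed.

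Your route, by contrast, peels off $n=p-1$ as $\Sigma'$ (which you correctly identify as the sole $O(p)$ contribution) and then works with $\Sigma''$ modulo $p^3$. The price is exactly what you flag in your last paragraph: at $(j,k)=(m,p-1-m)$ both the inner $n$-sum and the subsequent $j$-sum via Lemma~\ref{identity3} develop formal $1/p$ singularities that must be cancelled by hand. You resolve them correctly---your closed form $\f1{p(p-1-2m)}\bigl(1-(-1)^m\binom{2m}m/\binom{p-1-m}m\bigr)$ for the leftover $j\le m-1$ piece and $2(-1)^mH_m$ for the truncated inner sum both check out, and they combine with the telescoped generic part to give your stated $\Sigma''$. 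The trade-off: your organization makes it transparent where the main term lives, while the paper's organization avoids the endpoint bookkeeping entirely by letting the $1/p$ at $k=p-1-m$ be absorbed naturally into the telescoping harmonic sum.
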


\begin{proof}
In view of Lemma \ref{xkx+1k}, for $0\leq j\leq m,\ m+1\leq k\leq p-m-1$, we have
\begin{align}\label{xjxk}
&\binom{x}{j}\binom{x+j}{j}\binom{x}{k}\binom{x+k}{k}\notag\\
&\quad\eq \f{pt(-1)^{m+k+1}\binom{m}{j}\binom{m+j}{j}\binom{m+k}{k}}{(k-m)\binom{k}{m}}(1+pt(H_{m+j}+H_{m+k}-H_{m-j}-H_{k-m-1}))\pmod{p^3}.
\end{align}
Note that
\begin{equation}\label{p-1n}
\binom{p-1}{n}=\prod_{s=1}^{n}\f{p-s}{s}=(-1)^n\prod_{s=1}^{n}\l(1-\f{p}{s}\r)\eq(-1)^n(1-pH_n)\pmod{p^2}
\end{equation}
for $0\leq n\leq p-1$. Combining \eqref{xjxk}, \eqref{p-1n} and Lemma \ref{identity1}, we deduce that
\begin{align}\label{sigma2decom}
&p\sum_{j=0}^m\sum_{k=m+1}^{p-m-1}\binom{x}{j}\binom{x+j}{j}\binom{x}{k}\binom{x+k}{k}\sum_{n=0}^{j+k}\f{1}{n+1}\binom{n}{j}\binom{j}{n-k}\binom{p-1}{n}\notag\\
&\quad\eq p^2t(-1)^{m+1}\sum_{j=0}^m\sum_{k=m+1}^{p-m-1}\f{(-1)^k\binom{m}{j}\binom{m+j}{j}\binom{m+k}{k}}{(k-m)\binom{k}{m}}(1+pt(H_{m+j}+H_{m+k}-H_{m-j}-H_{k-m-1}))\notag\\
&\qquad\times \sum_{n=0}^{j+k}\f{(-1)^n}{n+1}\binom{n}{j}\binom{j}{n-k}(1-pH_n)\notag\\
&\quad\eq p^2t(-1)^{m+1}\sum_{j=0}^m\sum_{k=m+1}^{p-m-1}\f{(-1)^k\binom{m}{j}\binom{m+j}{j}\binom{m+k}{k}}{(k-m)\binom{k}{m}}(1+pt(H_{m+j}+H_{m+k}-H_{m-j}-H_{k-m-1}))\notag\\
&\qquad\times \f{(-1)^{j+k}}{(j+k+1)\binom{j+k}{j}}\pmod{p^3},
\end{align}
where we have used the fact $H_n/(n+1)$ is a $p$-adic integer for each $n\in\{0,1,2,\ldots,p-1\}$. With the aid of Lemma \ref{identity3},
\begin{align}\label{sigma2decom1}
&p^2t(-1)^{m+1}\sum_{j=0}^m\sum_{k=m+1}^{p-m-1}\f{(-1)^k\binom{m}{j}\binom{m+j}{j}\binom{m+k}{k}}{(k-m)\binom{k}{m}}\f{(-1)^{j+k}}{(j+k+1)\binom{j+k}{j}}\notag\\
&\quad=p^2t(-1)^{m+1}\sum_{k=m+1}^{p-m-1}\f{\binom{m+k}{k}}{(k-m)\binom{k}{m}}\f{\binom{k}{m}}{(k+1)\binom{m+k+1}{m}}\notag\\
&\quad=p^2t(-1)^{m+1}\sum_{k=m+1}^{p-m-1}\f{1}{(k-m)(m+k+1)}\notag\\
&\quad=\f{p^2t(-1)^{m+1}}{2m+1}\sum_{k=m+1}^{p-m-1}\l(\f{1}{k-m}-\f{1}{m+k+1}\r).
\end{align}
Observe that
\begin{align*}
&p^2\sum_{k=m+1}^{p-m-1}\l(\f{1}{k-m}-\f{1}{m+k+1}\r)\\
&\quad=p^2\sum_{k=1}^{p-2m-1}\f{1}{k}-p-p^2\sum_{k=m+1}^{p-m-2}\f{1}{m+p-1-k+1}\\
&\quad\eq p^2\sum_{k=1}^{p-2m-1}\f{1}{k}-p+p^2\sum_{k=1}^{p-2m-2}\f{1}{k}\\
&\quad\eq 2p^2\sum_{k=1}^{p-2m-1}\f{1}{k}-p+\f{p^2}{2m+1}\\
&\quad\eq 2p^2H_{2m}-p+\f{p^2}{2m+1}\pmod{p^3}.
\end{align*}
Substituting this into \eqref{sigma2decom1}, we have
\begin{align}\label{sigma2decom2}
&p^2t(-1)^{m+1}\sum_{j=0}^m\sum_{k=m+1}^{p-m-1}\f{\binom{m}{j}\binom{m+j}{j}\binom{m+k}{k}}{(k-m)\binom{k}{m}}\f{(-1)^{j+k}}{(j+k+1)\binom{j+k}{j}}\notag\\
&\quad \eq \f{pt(-1)^m}{2m+1}-\f{p^2t(-1)^m}{(2m+1)^2}-\f{2p^2t(-1)^mH_{2m}}{2m+1}\pmod{p^3}.
\end{align}
On the other hand,
\begin{align}\label{sigma2decom3}
&p^3t^2(-1)^{m+1}\sum_{j=0}^m\sum_{k=m+1}^{p-m-1}\f{(-1)^k\binom{m}{j}\binom{m+j}{j}\binom{m+k}{k}}{(k-m)\binom{k}{m}}(H_{m+j}+H_{m+k}-H_{m-j}-H_{k-m-1})\f{(-1)^{j+k}}{(j+k+1)\binom{j+k}{j}}\notag\\
&\quad\eq -p^2t^2\f{\binom{2m}{m}\binom{p-1}{m}}{(p-2m-1)\binom{p-m-1}{m}\binom{p-1}{m}}(H_{2m}+H_{p-1}-H_{p-2m-2})\notag\\
&\quad\eq -\f{p^2t^2(-1)^m}{(2m+1)^2}\pmod{p^3}.
\end{align}
Substituting \eqref{sigma2decom2} and \eqref{sigma2decom3} into \eqref{sigma2decom}, we arrive at
\begin{align*}
&p\sum_{j=0}^m\sum_{k=m+1}^{p-m-1}\binom{x}{j}\binom{x+j}{j}\binom{x}{k}\binom{x+k}{k}\sum_{n=0}^{j+k}\f{1}{n+1}\binom{n}{j}\binom{j}{n-k}\binom{p-1}{n}\\
&\quad\eq \f{pt(-1)^m}{2m+1}-\f{p^2t(-1)^m}{(2m+1)^2}-\f{p^2t^2(-1)^m}{(2m+1)^2}-\f{2p^2t(-1)^mH_{2m}}{2m+1}\pmod{p^3}.
\end{align*}
This concludes the proof.
\end{proof}

\begin{lemma}\label{sigma3}
For any odd prime $p$ and $p$-adic integer $x$ with $m<(p-1)/2$, we have
\begin{align*}
&p\sum_{j=0}^m\sum_{k=p-m}^{p-1}\binom{x}{j}\binom{x+j}{j}\binom{x}{k}\binom{x+k}{k}\sum_{n=0}^{j+k}\f{1}{n+1}\binom{n}{j}\binom{j}{n-k}\binom{p-1}{n}\\
&\quad\eq \f{p^2t(t+1)(-1)^mH_{2m}}{2m+1}\pmod{p^3}.
\end{align*}
\end{lemma}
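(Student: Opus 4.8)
The plan is to exploit the strong $p$-divisibility of the four binomial factors when $0\le j\le m$ and $p-m\le k\le p-1$. By Lemma~\ref{xkx+1k}, $\binom xj\binom{x+j}j$ is a $p$-adic integer with $\binom xj\binom{x+j}j\eq\binom mj\binom{m+j}j\pmod p$, while $\binom xk\binom{x+k}k$ is divisible by $p^2$ and equals $\f{(-1)^mp^2t(t+1)}{k(k-m)\binom m{p-k}\binom km}$ modulo $p^3$. Hence $p\binom xj\binom{x+j}j\binom xk\binom{x+k}k$ is divisible by $p^3$. For $0\le n\le p-2$ the factor $\f1{n+1}\binom nj\binom j{n-k}\binom{p-1}n$ is a $p$-adic integer, so every such term of the inner sum drops out modulo $p^3$; only $n=p-1$, which carries the pole $\f1{n+1}=\f1p$, can contribute, and it does so precisely when $j+k\ge p-1$ (so that $\binom j{p-1-k}\neq0$). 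Thus, modulo $p^3$,
\begin{align*}
&p\sum_{j=0}^m\sum_{k=p-m}^{p-1}\binom xj\binom{x+j}j\binom xk\binom{x+k}k\sum_{n=0}^{j+k}\f1{n+1}\binom nj\binom j{n-k}\binom{p-1}n\\
&\quad\eq\sum_{j=0}^m\sum_{k=p-m}^{p-1}\binom xj\binom{x+j}j\binom xk\binom{x+k}k\binom{p-1}j\binom j{p-1-k}.
\end{align*}

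Next I would put $k=p-\ell$ with $1\le\ell\le m$, so that $p-1-k=\ell-1$. Since the summand is now a multiple of $p^2$, each surviving factor may be replaced by its residue modulo $p$: $\binom xj\binom{x+j}j\eq\binom mj\binom{m+j}j$, $\binom{p-1}j\eq(-1)^j$, $k\eq-\ell$, $k-m\eq-(\ell+m)$, $\binom m{p-k}=\binom m\ell$, and $\binom km=\binom{p-\ell}m\eq(-1)^m\binom{\ell+m-1}m$. After the signs cancel, the right-hand side above becomes
$$
p^2t(t+1)\sum_{\ell=1}^m\f1{\ell(\ell+m)\binom m\ell\binom{\ell+m-1}m}\sum_{j=0}^m(-1)^j\binom mj\binom{m+j}j\binom j{\ell-1}\pmod{p^3}.
$$

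Finally I would evaluate the inner sum. Using $\binom mj\binom j{\ell-1}=\binom m{\ell-1}\binom{m-\ell+1}{j-\ell+1}$, $\binom{m+j}j=\binom{m+j}m$, the substitution $i=j-\ell+1$, and the classical evaluation $\sum_{i=0}^N(-1)^i\binom Ni\binom{C+i}m=(-1)^N\binom C{m-N}$ (here with $N=m-\ell+1$, $C=m+\ell-1$, so $m-N=\ell-1$), the inner sum collapses to $(-1)^m\binom m{\ell-1}\binom{m+\ell-1}m$. Cancelling the common factor $\binom{m+\ell-1}m$ and using $\binom m{\ell-1}/\binom m\ell=\ell/(m-\ell+1)$, the double sum reduces to
$$
p^2t(t+1)(-1)^m\sum_{\ell=1}^m\f1{(m-\ell+1)(m+\ell)}=\f{p^2t(t+1)(-1)^m}{2m+1}\sum_{\ell=1}^m\l(\f1{m-\ell+1}+\f1{m+\ell}\r),
$$
and the bracketed sum equals $H_m+(H_{2m}-H_m)=H_{2m}$, which is precisely the claimed congruence (the case $m=0$ being trivial). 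I expect the main obstacle to be the bookkeeping around the polar term $n=p-1$---verifying that it is the only term of the inner sum not killed by the divisibility $p^3\mid p\binom xj\binom{x+j}j\binom xk\binom{x+k}k$, and tracking it to full $p^3$-precision via Lemma~\ref{xkx+1k}. The remaining ingredients---routine $p$-adic estimates, a couple of elementary binomial identities, one finite-difference evaluation, and a partial-fraction identity---are straightforward.
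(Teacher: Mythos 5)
Your proposal is correct and follows essentially the same route as the paper: use Lemma \ref{xkx+1k} to extract the factor $p^2t(t+1)$, observe that only the polar term $n=p-1$ survives modulo $p^3$, evaluate the inner $j$-sum by a Vandermonde-type identity, and finish with partial fractions and harmonic numbers. The only cosmetic difference is that you reindex $k=p-\ell$ and invoke the finite-difference form of Vandermonde, whereas the paper applies the Chu--Vandermonde identity directly before reducing modulo $p$; the computations are equivalent.
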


\begin{proof}
By Lemma \ref{xkx+1k}, for $0\leq j\leq m$ and $p-m \leq k\leq p-1$ we have
\begin{align*}
\binom{x}{j}\binom{x+j}{j}\binom{x}{k}\binom{x+k}{k}\eq\f{(-1)^mp^2t(t+1)\binom{m}{j}\binom{m+j}{j}}{k(k-m)\binom{m}{p-k}\binom{k}{m}}\pmod{p^3}.
\end{align*}
Therefore,
\begin{align*}
&p\sum_{j=0}^m\sum_{k=p-m}^{p-1}\binom{x}{j}\binom{x+j}{j}\binom{x}{k}\binom{x+k}{k}\sum_{n=0}^{j+k}\f{1}{n+1}\binom{n}{j}\binom{j}{n-k}\binom{p-1}{n}\\
&\quad\eq p^2t(t+1)(-1)^m\sum_{k=p-m}^{p-1}\sum_{j=p-1-k}^m\f{\binom{m}{j}\binom{m+j}{j}}{k(k-m)\binom{m}{p-k}\binom{k}{m}}\binom{p-1}{j}\binom{j}{p-1-k}\\
&\quad=p^2t(t+1)\sum_{k=p-m}^{p-1}\f{\binom{m}{p-1-k}\binom{m+p-1-k}{m}}{k(k-m)\binom{m}{p-k}\binom{k}{m}}\\
&\quad\eq -p^2t(t+1)(-1)^m\sum_{k=p-m}^{p-1}\f{1}{(m+k-p+1)(k-m)}\pmod{p^3},
\end{align*}
where we have used the fact
\begin{align*}
&\sum_{j=p-1-k}^{m}\binom{m}{j}\binom{-m-1}{j}\binom{j}{p-1-k}\\
&\quad=\binom{m}{p-1-k}\sum_{j=p-1-k}^{m}\binom{-m-1}{j}\binom{k+m+1-p}{m-j}\\
&\quad=\binom{m}{p-1-k}\binom{k-p}{m}\\
&\quad=(-1)^m\binom{m}{p-1-k}\binom{m+p-1-k}{m}
\end{align*}
by the Chu-Vandermonde identity. Applying the partial fraction decomposition, we get
\begin{align*}
&\sum_{k=p-m}^{p-1}\f{1}{(m+k-p+1)(k-m)}\\
&\quad\eq \f{1}{2m+1}\l(\sum_{k=p-m}^{p-1}\f{1}{k-m}-\sum_{k=p-m}^{p-1}\f{1}{k+1+m-p}\r)\\
&\quad=\f{1}{2m+1}\l(\sum_{k=1}^{m}\f{1}{p-k-m}-\sum_{k=1}^{m}\f{1}{k}\r)\\
&\quad\eq-\f{H_{2m}}{2m+1}\pmod{p}.
\end{align*}
Combining the above, we obtain the desired result.
\end{proof}

\begin{lemma}\label{sigma5}
For any odd prime $p$ and $p$-adic integer $x$ with $m<(p-1)/2$, we have
\begin{align*}
&p\sum_{j=m+1}^{p-m-1}\sum_{k=m+1}^{p-m-1}\binom{x}{j}\binom{x+j}{j}\binom{x}{k}\binom{x+k}{k}\sum_{n=0}^{j+k}\f{1}{n+1}\binom{n}{j}\binom{j}{n-k}\binom{p-1}{n}\\
&\quad\eq-\f{2p^2t^2(-1)^m}{(2m+1)^2}-\f{2p^2t^2(-1)^m H_{2m}}{2m+1}\pmod{p^3}.
\end{align*}
\end{lemma}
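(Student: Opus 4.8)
The strategy follows Lemmas \ref{sigma1}--\ref{sigma3}, with the new feature that both $\binom{x}{j}\binom{x+j}{j}$ and $\binom{x}{k}\binom{x+k}{k}$ lie in the middle range, each contributing a factor $\sim pt$. First I would apply the third part of Lemma \ref{xkx+1k} to both: writing $C_j:=\f{(-1)^{m+j+1}\binom{m+j}{j}}{(j-m)\binom{j}{m}}$, a $p$-adic unit for $m+1\leq j\leq p-m-1$, one has $\binom{x}{j}\binom{x+j}{j}\eq ptC_j(1+pt(H_{m+j}-H_{j-m-1}))\pmod{p^3}$, so $p\binom{x}{j}\binom{x+j}{j}\binom{x}{k}\binom{x+k}{k}\eq p^3t^2C_jC_k\pmod{p^4}$. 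Since among $1,\dots,j+k+1$ only $p$ itself is divisible by $p$, the inner sum $\Theta_{j,k}:=\sum_{n=0}^{j+k}\f1{n+1}\binom{n}{j}\binom{j}{n-k}\binom{p-1}{n}$ equals a $p$-adic integer plus its $n=p-1$ term $\f1p\binom{p-1}{j}\binom{j}{p-1-k}$; in particular $v_p(\Theta_{j,k})\geq-1$. Hence the $pt$-corrections and the $\mathcal O(p^4)$ error are absorbed, only the $n=p-1$ term of $\Theta_{j,k}$ contributes modulo $p^3$, and
$$p\sum_{j=m+1}^{p-m-1}\sum_{k=m+1}^{p-m-1}\binom{x}{j}\binom{x+j}{j}\binom{x}{k}\binom{x+k}{k}\Theta_{j,k}\eq p^2t^2\sum_{j=m+1}^{p-m-1}\sum_{k=m+1}^{p-m-1}C_jC_k\binom{p-1}{j}\binom{j}{p-1-k}\pmod{p^3}.$$
Since the remaining double sum lies in $\Z_p$, it suffices to compute it modulo $p$.

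To that end I would replace $\binom{p-1}{j}$ by $(-1)^j$ and substitute $k=p-1-l$, so that $\binom{j}{p-1-k}=\binom{j}{l}$ and the summand vanishes unless $m\leq l\leq\min(j,p-2-m)$. Reducing the binomials in $C_{p-1-l}$ via $\binom{a+p}{b}\eq\binom{a}{b}$ and $\binom{-1-a}{b}=(-1)^b\binom{a+b}{b}$ gives $C_{p-1-l}\eq\f{(-1)^{m+l}\binom{l}{m}}{(l+m+1)\binom{m+l}{l}}\pmod p$; then $\binom{l}{m}\binom{j}{l}/\binom{j}{m}=\binom{j-m}{l-m}$ and a sign count recast the $l$-sum, for each fixed $j$, as $\f{(-1)^{m+1}\binom{m+j}{j}}{j-m}\sum_{l'}\f{(-1)^{l'}\binom{j-m}{l'}}{(l'+2m+1)\binom{2m+l'}{m}}$. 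The inner sum is handled by the elementary identity $\sum_{l'=0}^{N}\f{(-1)^{l'}\binom{N}{l'}}{(l'+2m+1)\binom{2m+l'}{m}}=\f1{(2m+N+1)\binom{2m+N}{m}}$ (immediate from $\f1{(2m+l'+1)\binom{2m+l'}{m}}=\int_0^1u^m(1-u)^{m+l'}\,du$ and the binomial theorem): for $j\leq p-2-m$ it is complete and, since $\binom{m+j}{j}=\binom{m+j}{m}$, reduces the $j$-term to $\f{(-1)^{m+1}}{(j-m)(m+j+1)}$; for $j=p-m-1$ the truncation omits the term $l'=p-1-2m$, equal to $\f{(-1)^{p-1-2m}}{p\binom{p-1}{m}}$, which cancels the polar value $\f1{p\binom{p-1}{m}}$ of the completed sum because $(-1)^{p-1-2m}=1$, so this $j$ drops out.

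Summing over $j$ then gives $p^2t^2(-1)^{m+1}\sum_{j=m+1}^{p-2-m}\f1{(j-m)(m+j+1)}\pmod{p^3}$; the partial fraction $\f1{(j-m)(m+j+1)}=\f1{2m+1}\(\f1{j-m}-\f1{m+j+1}\)$ together with $H_{p-1}\eq0$ and $H_{p-2-2m}\eq H_{2m+1}\pmod p$ evaluates this sum to $\f{2H_{2m+1}}{2m+1}\pmod p$, and writing $H_{2m+1}=H_{2m}+\f1{2m+1}$ produces exactly $-\f{2p^2t^2(-1)^m}{(2m+1)^2}-\f{2p^2t^2(-1)^mH_{2m}}{2m+1}\pmod{p^3}$. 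I expect the main obstacle to be this residue stage: choosing the substitution $k\mapsto p-1-l$ that makes the double sum telescope through the Beta-integral identity, keeping the competing signs $(-1)^{m+j+1}$, $(-1)^{m+l}$, $(-1)^j$ straight, and verifying the delicate cancellation that removes the boundary index $j=p-m-1$ although each of its two polar pieces is genuinely non-$p$-integral.
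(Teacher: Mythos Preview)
Your proposal is correct and follows essentially the same route as the paper: apply Lemma~\ref{xkx+1k} to pull out $p^2t^2$, isolate the $n=p-1$ term of $\Theta_{j,k}$, collapse the inner sum via a Gould/Beta identity (the paper cites Gould's (4.2), which is the same identity as your Beta-integral formula after the rewriting $(l'+2m+1)\binom{2m+l'}{m}=(m+1)\binom{2m+l'+1}{m+1}$), treat the boundary index $j=p-m-1$ separately, and finish with partial fractions and $H_{p-1-k}\equiv H_k\pmod p$. The only notable variation is your handling of the boundary term: you observe that the truncated $l'$-sum vanishes \emph{exactly} because the completed Beta sum and the omitted $l'=p-1-2m$ term are both $1/\bigl(p\binom{p-1}{m}\bigr)$, whereas the paper instead invokes the separate identity $\sum_{k=M}^{N-1}\binom{k-1}{M-1}/(N-k)=\binom{N-1}{M-1}(H_{N-1}-H_{M-1})$ from Gould's list to show $\sum_{k=m+1}^{p-m-1}\binom{m+k}{k}/(k-m)\equiv 0\pmod p$.
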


\begin{proof}
By Lemma \ref{xkx+1k}, for $m+1\leq j,k\leq p-1-m$, we have
\begin{align*}
&\binom{x}{j}\binom{x+j}{j}\binom{x}{k}\binom{x+k}{k}\\
&\quad\eq \f{(-1)^{j+k}p^2t^2\binom{m+j}{j}\binom{m+k}{k}}{(j-m)(k-m)\binom{j}{m}\binom{k}{m}}\pmod{p^3}.
\end{align*}
Hence we get
\begin{align}\label{sigma5decom}
&p\sum_{j=m+1}^{p-m-1}\sum_{k=m+1}^{p-m-1}\binom{x}{j}\binom{x+j}{j}\binom{x}{k}\binom{x+k}{k}\sum_{n=0}^{j+k}\f{1}{n+1}\binom{n}{j}\binom{j}{n-k}\binom{p-1}{n}\notag\\
&\quad\eq p^2t^2\sum_{j=m+1}^{p-m-1}\sum_{k=\max\{m+1,p-1-j\}}^{p-m-1}\f{(-1)^{j}\binom{m+j}{j}\binom{m+k}{k}\binom{k}{p-1-j}}{(j-m)(k-m)\binom{j}{m}\binom{k}{m}}\notag\\
&\quad\eq-\f{p^2t^2(-1)^m}{(2m+1)\binom{2m}{m}}\sum_{k=m+1}^{p-m-1}\f{\binom{m+k}{k}}{k-m}+p^2t^2\sum_{j=m+1}^{p-m-2}\sum_{k=p-1-j}^{p-m-1}\f{(-1)^{j}\binom{m+j}{j}\binom{m+k}{k}\binom{k}{p-1-j}}{(j-m)(k-m)\binom{j}{m}\binom{k}{m}}\notag\\
&\quad:=\Sigma_1+\Sigma_2\pmod{p^3}.
\end{align}

From \cite[(1.132)]{G} we know
\begin{equation}\label{Gouldlist}
\sum_{k=M}^{N-1}\f{\binom{k-1}{M-1}}{N-k}=\binom{N-1}{M-1}\sum_{k=M}^{N-1}\f1k.
\end{equation}
Putting $M=m+1,\ N=p-m$ in \eqref{Gouldlist}, we get
\begin{equation}\label{Gouldlistsubs}
\sum_{k=m+1}^{p-m-1}\f{\binom{k-1}{m}}{p-m-k}=\binom{p-m-1}{m}(H_{p-m-1}-H_m).
\end{equation}
Replacing $k$ with $p-k$ in \eqref{Gouldlistsubs}, we obtain
$$
\sum_{k=m+1}^{p-m-1}\f{\binom{p-k-1}{m}}{k-m}=\binom{p-m-1}{m}(H_{p-m-1}-H_m).
$$
Then, modulo $p$, we have
$$
(-1)^m\sum_{k=m+1}^{p-m-1}\f{\binom{m+k}{k}}{k-m}\eq0\pmod{p},
$$
which implies that
\begin{equation}\label{Sigma1}
\Sigma_1\eq0\pmod{p^3}.
\end{equation}

Now we consider $\Sigma_2$. Note that for $m+1\leq j\leq p-m-2$,
\begin{align}\label{Sigma2sum}
\sum_{k=p-1-j}^{p-m-1}\f{\binom{m+k}{k}\binom{k}{p-1-j}}{(k-m)\binom{k}{m}}&\eq \sum_{k=p-1-j}^{p-m-1}\f{(-1)^k\binom{p-m-1}{k}\binom{k}{p-1-j}}{(k-m)\binom{k}{m}}\notag\\
&=\f{\binom{p-m-1}{j-m}}{m+1}\sum_{k=p-1-j}^{p-m-1}\f{(-1)^k\binom{j-m}{j+k-p+1}}{\binom{k}{m+1}}\notag\\
&=\f{(-1)^j\binom{p-m-1}{j-m}}{m+1}\sum_{k=0}^{j-m}\f{(-1)^k\binom{j-m}{k}}{\binom{p-1-j+k}{m+1}}\pmod{p}.
\end{align}
From \cite[(4.2)]{G}, we know for $b\geq c>0$,
\begin{equation}\label{Gouldlist'}
\sum_{k=0}^{N}\f{(-1)^k\binom{N}{k}}{\binom{b+k}{c}}=\f{c}{(N+c)\binom{N+b}{b-c}}.
\end{equation}
Substituting \eqref{Gouldlist'} with $N=j-m,\ b=p-1-j,\ c=m+1$ into \eqref{Sigma1}, we arrive at
\begin{equation}\label{Sigma2sum'}
\sum_{k=p-1-j}^{p-m-1}\f{\binom{m+k}{k}\binom{k}{p-1-j}}{(k-m)\binom{k}{m}}\eq \f{(-1)^j\binom{p-m-1}{j-m}}{(j+1)\binom{p-m-1}{j+1}}\pmod{p}.
\end{equation}
Therefore,
\begin{align*}
\Sigma_2&=p^2t^2\sum_{j=m+1}^{p-m-2}\sum_{k=p-1-j}^{p-m-1}\f{(-1)^{j}\binom{m+j}{j}\binom{m+k}{k}\binom{k}{p-1-j}}{(j-m)(k-m)\binom{j}{m}\binom{k}{m}}\\
&\eq p^2t^2\sum_{j=m+1}^{p-m-2}\f{\binom{m+j}{j}\binom{p-m-1}{j-m}}{(j-m)(j+1)\binom{p-m-1}{j+1}\binom{j}{m}}\\
&\eq -p^2t^2(-1)^m\sum_{j=m+1}^{p-m-2}\f{1}{(j-m)(j+m+1)}\\
&=\f{p^2t^2(-1)^m}{2m+1}\sum_{j=m+1}^{p-m-2}\l(\f{1}{j+m+1}-\f{1}{j-m}\r)\\
&=\f{p^2t^2(-1)^m}{2m+1}(H_{p-1}-H_{2m+1}-H_{p-2m-2})\pmod{p^3}.
\end{align*}
Then the desired result follows from the fact $H_{p-1}\eq0\pmod{p}$ and $H_{p-1-k}\eq H_k\pmod{p}$.
\end{proof}

\section{Proof of Theorem \ref{mainth}}

We divide the proof into three cases.

{\it Case 1}. $m<(p-1)/2$.

From \cite[(3.5)]{Liu2018}, we know
$$
\sum_{n=0}^{p-1}\binom{n}{j}\binom{n}{k}=p\sum_{n=0}^{j+k}\f{1}{n+1}\binom{n}{j}\binom{j}{n-k}\binom{p-1}{n}.
$$
Therefore,
\begin{align*}
\sum_{n=0}^{p-1}s_n(x)^2&=\sum_{n=0}^{p-1}\sum_{j=0}^n\sum_{k=0}^n\binom{n}{j}\binom{n}{k}\binom{x}{j}\binom{x+j}{j}\binom{x}{k}\binom{x+k}{k}\\
&=\sum_{j=0}^{p-1}\sum_{k=0}^{p-1}\binom{x}{j}\binom{x+j}{j}\binom{x}{k}\binom{x+k}{k}\sum_{n=0}^{p-1}\binom{n}{j}\binom{n}{k}\\
&=p\sum_{j=0}^{p-1}\sum_{k=0}^{p-1}\binom{x}{j}\binom{x+j}{j}\binom{x}{k}\binom{x+k}{k}\sum_{n=0}^{j+k}\f{1}{n+1}\binom{n}{j}\binom{j}{n-k}\binom{p-1}{n}\\
&=\sum_{s=1}^9\sigma_s,
\end{align*}
where
\begin{gather*}
\sigma_1=\sum_{j=0}^m\sum_{k=0}^mf(j,k),\quad \sigma_2=\sum_{j=0}^m\sum_{k=m+1}^{p-1-m}f(j,k),\quad \sigma_3=\sum_{j=0}^m\sum_{k=p-m}^{p-1}f(j,k),\\
\sigma_4=\sum_{j=m+1}^{p-1-m}\sum_{k=0}^mf(j,k),\quad \sigma_5=\sum_{j=m+1}^{p-1-m}\sum_{k=m+1}^{p-1-m}f(j,k),\quad \sigma_6=\sum_{j=m+1}^{p-1-m}\sum_{k=p-m}^{p-1}f(j,k),\\
\sigma_7=\sum_{j=p-m}^{p-1}\sum_{k=0}^mf(j,k),\quad \sigma_8=\sum_{j=p-m}^{p-1}\sum_{k=m+1}^{p-1-m}f(j,k),\quad \sigma_9=\sum_{j=p-m}^{p-1}\sum_{k=p-m}^{p-1}f(j,k),
\end{gather*}
and $f(j,k)\ (0\leq j,k\leq p-1)$ stand for the summands
$$
p\binom{x}{j}\binom{x+j}{j}\binom{x}{k}\binom{x+k}{k}\sum_{n=0}^{j+k}\f{1}{n+1}\binom{n}{j}\binom{j}{n-k}\binom{p-1}{n}.
$$
By the symmetry of $j$ and $k$, we have $\sigma_2=\sigma_4,\ \sigma_3=\sigma_7$. Meanwhile, in view of Lemma \ref{xkx+1k}, we immediately obtain $\sigma_6\eq\sigma_8\eq\sigma_9\eq0\pmod{p^3}$. Therefore,
\begin{equation}\label{keystep}
\sum_{n=0}^{p-1}s_n(x)^2\eq \sigma_1+2\sigma_2+2\sigma_3+\sigma_5\pmod{p^3}.
\end{equation}
This, together with Lemmas \ref{sigma1}--\ref{sigma5}, gives
\begin{equation}\label{keystep'}
\sum_{n=0}^{p-1}s_n(x)^2\eq \f{p(-1)^m}{2m+1}+\f{2p t(-1)^m}{2m+1}-\f{2p^2t(-1)^m}{(2m+1)^2}-\f{4p^2t^2(-1)^m}{(2m+1)^2}\pmod{p^3}.
\end{equation}

On the other hand, modulo $p^3$, the right-hand side of \eqref{sunconjeq} is equivalent to
\begin{align}\label{keystep''}
(-1)^m\f{p+2p t}{2m+1+2p t}&\eq (-1)^m\f{(p+2p t)(2m+1-2p t)}{(2m+1)^2}\notag\\
&\eq \f{p(-1)^m}{2m+1}+\f{2p t(-1)^m}{2m+1}-\f{2p^2t(-1)^m}{(2m+1)^2}-\f{4p^2t^2(-1)^m}{(2m+1)^2}.
\end{align}
Combining \eqref{keystep'} and \eqref{keystep''}, we conclude the proof of Theorem \ref{mainth} in this case.

\medskip

{\it Case 2}. $m>(p-1)/2$.

Clearly, $s_n(x)=s_n(-1-x)$ for all nonnegative integer $n$. So
$$
\sum_{n=0}^{p-1}s_n(x)^2=\sum_{n=0}^{p-1}s_n(-1-x)^2.
$$
Since $\<-1-x\>_{p}=p-1-m<(p-1)/2$, by Theorem \ref{mainth} in Case 1, we have
\begin{align*}
\sum_{n=0}^{p-1}s_n(x)^2&\eq (-1)^{\<-1-x\>_{p}}\f{p+2(-1-x-\<-1-x\>_{p})}{2(-x-1)+1}\\
&=(-1)^{\<x\>_p}\f{p+2(x-\<x\>_p)}{2x+1}\pmod{p^3}.
\end{align*}
This proves Theorem \ref{mainth} in Case 2.

\medskip

{\it Case 3}. $m=(p-1)/2$ and $x\neq -1/2$.

In this case, $m+1>p-1-m$. So we have
\begin{equation}\label{decom}
\sum_{n=0}^{p-1}s_n(x)^2\eq \sigma_1+2\sigma_3\pmod{p^3},
\end{equation}
where $\sigma_1$ and $\sigma_3$ are defined as in Case 1.

We first evaluate $\sigma_1$ modulo $p^3$. Write $x=-1/2+ps=m+pt$. It is easy to see that for $0\leq k\leq m$ we have
\begin{align*}
\binom{x}{k}\binom{x+k}{k}&=\prod_{i=0}^{k-1}\l(-\f12-i+ps\r)\l(\f12+i+ps\r)\bigg/ k!^2\\
&= (-1)^k\prod_{i=0}^{k-1}\l(\l(\f12+i\r)^2-p^2s^2\r)\bigg/k!^2\\
&\eq (-1)^k\prod_{i=0}^{k-1}\l(\f12+i\r)^2\bigg/k!^2\l(1-p^2s^2\sum_{j=0}^{k-1}\f{1}{(1/2+j)^2}\r)\\
&=(-1)^k\binom{-1/2}{k}^2\l(1-4p^2s^2\sum_{j=1}^{k}\f{1}{(2j-1)^2}\r)\pmod{p^3}.
\end{align*}
Combining this with \cite[(3.6)]{Liu2018}, we deduce that
\begin{align}\label{xkx+1km}
\binom{x}{k}\binom{x+k}{k}&\eq \binom{m}{k}\binom{m+k}{k}\l(1+p^2(1-4s^2)\sum_{j=1}^{k}\f{1}{(2j-1)^2}\r)\notag\\
&=\binom{m}{k}\binom{m+k}{k}\l(1-4p^2t(t+1)\sum_{j=1}^{k}\f{1}{(2j-1)^2}\r)\pmod{p^3},
\end{align}
where we have used $s=t+1/2$. Therefore, by the symmetry of $j$ and $k$,
\begin{align}\label{msigma1}
\sigma_1&\eq p\sum_{j=0}^m\sum_{k=0}^m\binom{m}{j}\binom{m+j}{j}\binom{m}{k}\binom{m+k}{k}\l(1-8p^2t(t+1)\sum_{i=1}^{j}\f{1}{(2i-1)^2}\r)\notag\\
&\quad\times \sum_{n=0}^{j+k}\f{1}{n+1}\binom{n}{j}\binom{j}{n-k}\binom{p-1}{n}\pmod{p^3}.
\end{align}
By Lemma \ref{identity2} with $M=m$ and $y=p$, we have
\begin{align}\label{msigma1key1}
&p\sum_{j=0}^m\sum_{k=0}^m\sum_{n=0}^{j+k}\f{1}{n+1}\binom{m}{j}\binom{m+j}{j}\binom{m}{k}\binom{m+k}{k}\binom{n}{j}\binom{j}{n-k}\binom{p-1}{n}\notag\\
&\quad\eq(-1)^m(1-4p^2H_{m}^{(2)})\notag\\
&\quad\eq (-1)^m\pmod{p^3},
\end{align}
where we have used the fact that $H_{(p-1)/2}^{(2)}\eq0\pmod{p}$ for $p>3$. Moreover, since $H_{p-1}^{(2)}\eq0\pmod{p}$ for $p>3$, we have
\begin{align}\label{msigma1key2}
&8p^3t(t+1)\sum_{j=0}^m\sum_{k=0}^m\binom{m}{j}\binom{m+j}{j}\binom{m}{k}\binom{m+k}{k}\sum_{i=1}^j\f{1}{(2i-1)^2}\sum_{n=0}^{j+k}\f{1}{n+1}\binom{n}{j}\binom{j}{n-k}\binom{p-1}{n}\notag\\
&\quad\eq 8p^2t^2\binom{2m}{m}^3\l(H_{2m}^{(2)}-\f14H_m^{(2)}\r)\notag\\
&\quad\eq0\pmod{p^3}.
\end{align}
Substituting \eqref{msigma1key1} and \eqref{msigma1key2} into \eqref{msigma1}, we obtain
\begin{equation}\label{msigma1reduce}
\sigma_1\eq (-1)^m\pmod{p^3}.
\end{equation}

Now we evaluate $\sigma_2$ modulo $p^3$. In view of Lemma \ref{xkx+1k} and \eqref{xkx+1km},
$$
\sigma_3\eq p^3t(t+1)(-1)^m\sum_{j=0}^m\sum_{k=m+1}^{p-1}\f{\binom{m}{j}\binom{m+j}{j}}{k(k-m)\binom{m}{p-k}\binom{k}{m}}\sum_{n=0}^{j+k}\f{1}{n+1}\binom{n}{j}\binom{j}{n-k}\binom{p-1}{n}\pmod{p^3}.
$$
Then, by the same argument as in the proof of Lemma \ref{sigma3}, we obtain
\begin{equation}\label{msigma3reduce}
\sigma_3\eq -p^2t(t+1)(-1)^m\sum_{k=m+1}^{p-1}\f{1}{(k-m)^2}=-p^2t(t+1)(-1)^mH_{p-1-m}^{(2)}\eq0\pmod{p^3}.
\end{equation}
Substituting \eqref{msigma1reduce} and \eqref{msigma3reduce} into \eqref{decom}, we arrive at
$$
\sum_{n=0}^{p-1}s_n(x)^2\eq (-1)^m\pmod{p^3}.
$$
Furthermore, it is routine to verify that
$$
(-1)^m\f{p+2p t}{2m+1+2p t}=(-1)^m.
$$
Therefore, Theorem \ref{mainth} in Case 3 holds.

The proof of Theorem \ref{mainth} is now complete.\qed

\end{document}